\theoremstyle{plain}
\newtheorem{theorem}{Theorem}[section]
\newtheorem{lemma}[theorem]{Lemma}
\newtheorem{corollary}[theorem]{Corollary}
\theoremstyle{definition}
\newtheorem{definition}[theorem]{Definition}
\newtheorem{example}[theorem]{Example}
\theoremstyle{remark}
\newtheorem{remark}{Remark}
\begin{document}


\title{Unrestricted Douglas-Rachford algorithms for solving convex feasibility problems in Hilbert space}

\author{
\name{Kay Barshad\textsuperscript{a}\thanks{\noindent CONTACT\\ \noindent  K. Barshad, kaybarshad@technion.ac.il, kaybarshad@gmail.com\\ \noindent A. Gibali, avivg@braude.ac.il\\ \noindent S. Reich, sreich@technion.ac.il}, Aviv Gibali\textsuperscript{b} and Simeon Reich\textsuperscript{c}}
\affil{\textsuperscript{a,c}Department of Mathematics, The Technion -- Israel Institute of Technology,
32000 Haifa, Israel;\\ \textsuperscript{b}Department of Mathematics, Braude College, 2161002 Karmiel, Israel}}

\maketitle

\begin{abstract}
In this work we focus on the convex feasibility problem (CFP) in Hilbert space. A specific method in this area that has gained
a lot of interest in recent years is the Douglas-Rachford (DR) algorithm.
This algorithm was originally introduced in 1956 for solving stationary and non-stationary heat equations. Then in 1979, Lions and Mercier
adjusted and extended the algorithm with the aim of solving CFPs and even more general problems, such as finding zeros of the sum of two maximally monotone operators.
Many developments which implement various concepts concerning this algorithm have occurred during the last decade. We introduce an \color{black}unrestricted \color{black}
DR algorithm, which provides a general framework for such concepts. Using \color{black}unrestricted \color{black} products of a finite number of strongly nonexpansive
operators, we apply \color{black}this framework \color{black} to provide new iterative methods, where, \textit{inter alia}, such operators may be interlaced between
the operators used in the scheme of our \color{black}unrestricted \color{black} DR algorithm.
\end{abstract}

\begin{keywords}
Convex feasibility problem; common fixed point problem; Douglas-Rachford algorithm; iterative
method; strongly nonexpansive operator; \color{black}unrestricted \color{black} product.
\end{keywords}

\section{\label{sec1}\color{black}Preliminaries} \color{black}
\global\long\def\theenumi{\roman{enumi}}%

Suppose that $\mathcal{H}$ is a real Hilbert space with inner product
$\langle\cdot,\cdot\rangle$ and let $\Vert\cdot\Vert$ be the norm
induced by $\langle\cdot,\cdot\rangle$. We denote by $Id:\mathcal{H}\rightarrow\mathcal{H}$ the identity
operator on $\mathcal{H}$, that is, for all $x\in\mathcal{H}$, $Idx=x$. For an operator $T:\mathcal{H}\rightarrow\mathcal{H}$, the fixed points set of $T$ is defined as $\mathrm{Fix(\mathit{T})}:=\left\{x\in \mathcal{H} \mid Tx=x\right\}$.
For each nonempty and convex set $C\subseteq\mathcal{H}$, we denote
by $P_{C}$ the (unique) metric projection onto $C$, the existence
of which is guaranteed if $C$ is, in addition, closed. The expressions
$x_{n}\rightharpoonup x$ and $x_{n}\rightarrow x$ denote, respectively,
the weak and strong convergence to $x$ of a sequence $\left\{ x_{n}\right\} _{n=0}^{\infty}$
in $\mathscr{\left(\mathcal{H},\mathscr{\Vert\cdot\Vert}\right)}$.
Throughout this paper, $\mathbb{N}$ denotes the set of natural numbers
starting from $0$, and for any two integers $m$ and $n$, with $m\le n$,
we denote by $\left\{ m,\dots,n\right\} $ the set of all integers
between $m$ and $n$ (including $m$ and $n)$.

Now we recall the following types of algorithmic operators. For more
information on such operators, see, for example, \cite{key-11}.
\begin{definition}
Let $T:\mathcal{H}\rightarrow\mathscr{\mathcal{H}}$ be an operator
and let $\lambda\in\left[0,2\right]$. The operator $T_{\lambda}:\mathcal{H}\rightarrow\mathscr{\mathcal{H}}$
defined by $T_{\lambda}:=\left(1-\lambda\right)\mathrm{Id}+\lambda T$
is called a $\lambda$-$relaxation$ of the operator $T$. The operator
$T_{2}$ is called the \textit{reflection} of the operator $T$.
\end{definition}
\begin{remark}
\label{Relaxation has the same Fix} Clearly, $\mathrm{Fix}(T)=\mathrm{Fix}(T_{\lambda})$
for every operator $T:\mathcal{H}\rightarrow\mathcal{H}$ and every
$\lambda\in\left(0,2\right]$.
\end{remark}
\begin{definition}
An operator $T:\mathcal{H}\rightarrow\mathcal{H}$ is said to be \textit{nonexpansive}
if
\[
\left\Vert Tx-Ty\right\Vert \leq\left\Vert x-y\right\Vert \;\forall x,y\in\mathcal{H}.
\]
For $\lambda\in\left[0,2\right]$, an operator $T:\mathcal{H}\rightarrow\mathcal{H}$
is said to be $\lambda$-\textit{relaxed nonexpansive} if $T$ is
a $\lambda$-relaxation of a nonexpansive operator $U$, that is,
$T=U_{\lambda}$.
\end{definition}
\begin{remark}
\label{ComposNE}Clearly, a composition of nonexpansive operators
is nonexpansive.
\end{remark}
\begin{definition}
An operator $T:\mathcal{H}\rightarrow\mathcal{H}$ is said to be \textit{quasi-nonexpansive}
if $\mathrm{Fix}\left(T\right)\not=\emptyset$ and
\[
\left\Vert Tx-z\right\Vert \le\left\Vert x-z\right\Vert \;\forall (x,z)\in\mathcal{H}\times\mathrm{Fix}\left(T\right).
\]
\end{definition}
\begin{remark}
\label{NE is QNE}It is clear that a nonexpansive operator with
a fixed point is, in particular, quasi-nonexpansive.
\end{remark}
\begin{definition}
An operator $T:\mathcal{H}\rightarrow\mathcal{H}$ is called \textit{firmly
nonexpansive} if
\[
\left\langle Tx-Ty,x-y\right\rangle \ge\left\Vert Tx-Ty\right\Vert ^{2}\;\forall x,y\in\mathcal{H}.
\]
For $\lambda\in\left[0,2\right]$, an operator $T:\mathcal{H}\rightarrow\mathcal{H}$
is called $\lambda$-\textit{relaxed firmly nonexpansive} if $T$
is a $\lambda$-relaxation of a firmly nonexpansive operator $U$,
that is, $T=U_{\lambda}$.
\end{definition}
\begin{theorem}
\label{FNE-equiv.cond}Let $T:\mathcal{H}\rightarrow\mathcal{H}$
be an operator. The following conditions are equivalent.
\begin{enumerate}
\item $T$ is firmly nonexpansive.
\item $T_{\lambda}$ is nonexpansive for each $\lambda\in\left[0,2\right]$.
\item There exists a nonexpansive operator $N:\mathcal{H\rightarrow\mathcal{H}}$
such that $T=2^{-1}\left(Id+N\right)$.
\end{enumerate}
\end{theorem}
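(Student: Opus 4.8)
The plan is to prove the three conditions equivalent by a cyclic chain of implications $(1)\Rightarrow(3)\Rightarrow(2)\Rightarrow(1)$, with the whole argument resting on a single algebraic identity for the reflection $N:=T_{2}=2T-\mathrm{Id}$. Writing $Nx-Ny=2(Tx-Ty)-(x-y)$ and expanding the squared norm via bilinearity of the inner product, I would first record, for arbitrary $x,y\in\mathcal{H}$, the identity
\[
\left\Vert Nx-Ny\right\Vert ^{2}=\left\Vert x-y\right\Vert ^{2}-4\left(\left\langle Tx-Ty,x-y\right\rangle -\left\Vert Tx-Ty\right\Vert ^{2}\right).
\]
This makes the nonexpansiveness of $N$ equivalent, point by point, to nonnegativity of the ``firm-nonexpansiveness defect'' $\left\langle Tx-Ty,x-y\right\rangle -\left\Vert Tx-Ty\right\Vert ^{2}$, and essentially all of the theorem is read off from it.

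For $(1)\Rightarrow(3)$, assuming $T$ firmly nonexpansive, the bracketed term is $\ge0$ for all $x,y$, so the identity yields $\left\Vert Nx-Ny\right\Vert \le\left\Vert x-y\right\Vert $; hence $N=2T-\mathrm{Id}$ is nonexpansive and $T=2^{-1}\left(\mathrm{Id}+N\right)$, which is $(3)$. For $(3)\Rightarrow(2)$, given $T=2^{-1}\left(\mathrm{Id}+N\right)$ with $N$ nonexpansive, I would substitute into the definition of the relaxation and set $\mu:=\lambda/2\in\left[0,1\right]$ for $\lambda\in\left[0,2\right]$, obtaining
\[
T_{\lambda}=\left(1-\lambda\right)\mathrm{Id}+\lambda T=\left(1-\mu\right)\mathrm{Id}+\mu N.
\]
Thus $T_{\lambda}$ is a convex combination of the nonexpansive operators $\mathrm{Id}$ and $N$, and the triangle inequality gives $\left\Vert T_{\lambda}x-T_{\lambda}y\right\Vert \le\left(1-\mu\right)\left\Vert x-y\right\Vert +\mu\left\Vert Nx-Ny\right\Vert \le\left\Vert x-y\right\Vert $, so $T_{\lambda}$ is nonexpansive for every such $\lambda$.

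For $(2)\Rightarrow(1)$, it suffices to specialize to $\lambda=2$: then $T_{2}=N$ is nonexpansive by hypothesis, and feeding $\left\Vert Nx-Ny\right\Vert ^{2}\le\left\Vert x-y\right\Vert ^{2}$ back into the displayed identity forces the defect term to be $\le0$, that is, $\left\langle Tx-Ty,x-y\right\rangle \ge\left\Vert Tx-Ty\right\Vert ^{2}$, which is $(1)$. I do not anticipate any genuine obstacle here, since the entire content lies in the expansion of $\left\Vert 2(Tx-Ty)-(x-y)\right\Vert ^{2}$ and each implication is a one-line consequence of it. The only points meriting care are the bookkeeping of the coefficient $\mu=\lambda/2$ in $(3)\Rightarrow(2)$, and the observation that the full strength of $(2)$ is never used---only the single value $\lambda=2$---so that the cycle closes without circularity.
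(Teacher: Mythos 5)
Your proof is correct: the identity $\left\Vert Nx-Ny\right\Vert ^{2}=\left\Vert x-y\right\Vert ^{2}-4\left(\left\langle Tx-Ty,x-y\right\rangle -\left\Vert Tx-Ty\right\Vert ^{2}\right)$ for $N=2T-\mathrm{Id}$ is exactly the right engine, and each implication in your cycle $(1)\Rightarrow(3)\Rightarrow(2)\Rightarrow(1)$ follows from it as you claim. The paper itself only cites Theorem 2.2.10 of Cegielski's book rather than proving the statement, and the argument given there is essentially this same standard one via the nonexpansiveness of the reflection, so your proposal matches the intended proof.
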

\begin{proof}
See Theorem 2.2.10 in \cite{key-11}.
\end{proof}
\begin{definition}
For $\lambda\in\left(0,1\right)$, we say that an operator $T:\mathcal{H}\rightarrow\mathcal{H}$
is \textit{$\lambda$-averaged }if it is a $\lambda$-relaxed nonexpansive
operator. We say that an operator $T:\mathcal{H}\rightarrow\mathcal{H}$
is \textit{averaged }if it is $\lambda$-averaged for some $\lambda\in\left(0,1\right)$.
\end{definition}
\begin{corollary}
\label{2.2.17}Let $\lambda\in\left(0,2\right)$. An operator $T:\mathcal{H}\rightarrow\mathcal{H}$
is $\lambda$-relaxed firmly nonexpansive if and only if it is $(2^{-1}\lambda)$-averaged.
\end{corollary}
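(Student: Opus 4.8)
The plan is to reduce both notions to the characterization of firmly nonexpansive operators supplied by Theorem~\ref{FNE-equiv.cond}, namely the equivalence of its conditions (i) and (iii): an operator $U$ is firmly nonexpansive exactly when $U=2^{-1}(Id+N)$ for some nonexpansive $N$. Once this is available, the entire argument is a matter of pushing the relaxation parameter through this substitution, and the factor $2^{-1}$ appearing in the statement is precisely the $2^{-1}$ in the midpoint representation of $U$.

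First I would treat the forward implication. Assuming $T$ is $\lambda$-relaxed firmly nonexpansive, I write $T=U_{\lambda}=(1-\lambda)Id+\lambda U$ with $U$ firmly nonexpansive, and apply the (i)$\Rightarrow$(iii) direction of Theorem~\ref{FNE-equiv.cond} to replace $U$ by $2^{-1}(Id+N)$ for a suitable nonexpansive $N$. Substituting and collecting coefficients gives
\[
T=(1-\lambda)Id+\tfrac{\lambda}{2}\left(Id+N\right)=\left(1-\tfrac{\lambda}{2}\right)Id+\tfrac{\lambda}{2}N=N_{\lambda/2}.
\]
Since $\lambda\in(0,2)$ forces $\lambda/2\in(0,1)$, this exhibits $T$ as a $(\lambda/2)$-relaxation of the nonexpansive operator $N$, i.e.\ as a $(2^{-1}\lambda)$-averaged operator.

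For the converse I would run the same computation in reverse. Starting from a $(2^{-1}\lambda)$-averaged operator $T=\left(1-\tfrac{\lambda}{2}\right)Id+\tfrac{\lambda}{2}N$ with $N$ nonexpansive, I define $U:=2^{-1}(Id+N)$, which is firmly nonexpansive by the (iii)$\Rightarrow$(i) direction of Theorem~\ref{FNE-equiv.cond}, and verify by the identical rearrangement that $U_{\lambda}=(1-\lambda)Id+\tfrac{\lambda}{2}(Id+N)=T$. This identifies $T$ as a $\lambda$-relaxation of a firmly nonexpansive operator, as required.

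I do not expect any genuine obstacle here beyond careful bookkeeping of the relaxation parameters; the mathematical content is entirely carried by Theorem~\ref{FNE-equiv.cond}. The one point worth an explicit remark is the compatibility of parameter ranges: the hypothesis $\lambda\in(0,2)$ is exactly what guarantees $\lambda/2\in(0,1)$, so that the averagedness side of the equivalence is well defined (per the definition of averaged operators) at the same time as the $\lambda$-relaxed firmly nonexpansive side.
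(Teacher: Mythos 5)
Your argument is correct and complete: the paper itself offers no proof of Corollary \ref{2.2.17} beyond a citation to \cite{key-11}, and your direct computation via the equivalence of conditions (i) and (iii) of Theorem \ref{FNE-equiv.cond} is exactly the standard route, with the parameter bookkeeping ($\lambda\in(0,2)$ if and only if $2^{-1}\lambda\in(0,1)$) handled correctly in both directions. Nothing is missing.
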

\begin{proof}
See Corollary 2.2.17 in \cite{key-11}.
\end{proof}
\begin{theorem}
\label{thm:2.2.42}Let $n$ be a positive integer. For each $i=1,\dots,n$,
let $\lambda_{i}\in\left(0,2\right)$ and let $T_{i}:\mathcal{H}\rightarrow\mathcal{H}$
be $\lambda_{i}$-relaxed firmly nonexpansive. Then $T:=T_{n}T_{n-1}\cdots T_{1}$
is a $\lambda$-relaxed firmly nonexpansive operator for some $\lambda\in\left(0,2\right)$.
In addition, if $\cap_{i=1}^{n}\mathrm{Fix}(T_{i})\not=\emptyset$,
then $\mathrm{Fix}(T)=\cap_{i=1}^{n}\mathrm{Fix}(T_{i})$.
\end{theorem}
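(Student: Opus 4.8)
The plan is to reduce everything to averaged operators via Corollary \ref{2.2.17} and then prove two self-contained facts: that a finite composition of averaged operators is again averaged, and the fixed-point identity. By Corollary \ref{2.2.17}, each $T_{i}$ is $\alpha_{i}$-averaged with $\alpha_{i}:=2^{-1}\lambda_{i}\in(0,1)$; conversely, once $T$ is shown to be $\alpha$-averaged for some $\alpha\in(0,1)$, the same corollary returns that $T$ is $(2\alpha)$-relaxed firmly nonexpansive, so that $\lambda=2\alpha\in(0,2)$. Since $T_{n}T_{n-1}\cdots T_{1}=T_{n}(T_{n-1}\cdots T_{1})$, a straightforward induction on $n$ reduces the first assertion to the case of a composition of two averaged operators.

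The linchpin I would establish first is the following characterization: an operator $S$ is $\alpha$-averaged (with $\alpha\in(0,1)$) if and only if
\[
\|Sx-Sy\|^{2}\le\|x-y\|^{2}-\frac{1-\alpha}{\alpha}\|(x-Sx)-(y-Sy)\|^{2}\quad\forall x,y\in\mathcal{H}.
\]
Writing $S=(1-\alpha)\mathrm{Id}+\alpha N$ and using the identity $\|(1-\alpha)a+\alpha b\|^{2}=(1-\alpha)\|a\|^{2}+\alpha\|b\|^{2}-\alpha(1-\alpha)\|a-b\|^{2}$ with $a=x-y$ and $b=Nx-Ny$, together with $(x-Sx)-(y-Sy)=\alpha[(x-y)-(Nx-Ny)]$, this inequality is seen to be equivalent to the nonexpansiveness of $N$; this is a short computation in both directions.

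For the composition $T=T_{2}T_{1}$, I would apply the inequality above to $T_{1}$ (for the pair $x,y$) and to $T_{2}$ (for the pair $T_{1}x,T_{1}y$), and add them. Setting $a:=(x-T_{1}x)-(y-T_{1}y)$ and $b:=(T_{1}x-Tx)-(T_{1}y-Ty)$, one obtains
\[
\|Tx-Ty\|^{2}\le\|x-y\|^{2}-\frac{1-\alpha_{1}}{\alpha_{1}}\|a\|^{2}-\frac{1-\alpha_{2}}{\alpha_{2}}\|b\|^{2}.
\]
Since $(x-Tx)-(y-Ty)=a+b$, the bound $\|a+b\|^{2}\le2\|a\|^{2}+2\|b\|^{2}$ yields, with $\beta:=\min\{(1-\alpha_{1})/\alpha_{1},(1-\alpha_{2})/\alpha_{2}\}>0$,
\[
\|Tx-Ty\|^{2}\le\|x-y\|^{2}-\frac{\beta}{2}\|(x-Tx)-(y-Ty)\|^{2},
\]
which is precisely the averaged inequality for $T$ with $(1-\alpha)/\alpha=\beta/2$, i.e. $\alpha=(1+\beta/2)^{-1}\in(0,1)$. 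This recombination of the two separate displacement terms into the single displacement of the composite is the main obstacle; note that it is exactly here that one settles for some admissible $\alpha$ rather than the sharp one, which is all that the statement requires.

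Finally, for the fixed-point identity, the inclusion $\cap_{i=1}^{n}\mathrm{Fix}(T_{i})\subseteq\mathrm{Fix}(T)$ is immediate by applying the operators successively to a common fixed point. For the reverse inclusion, fix $z\in\cap_{i=1}^{n}\mathrm{Fix}(T_{i})$ (nonempty by hypothesis) and let $x$ satisfy $Tx=x$. Put $x_{0}:=x$ and $x_{i}:=T_{i}x_{i-1}$, so that $x_{n}=Tx=x$. Applying the averaged inequality for each $T_{i}$ to the pair $(x_{i-1},z)$ and using $z-T_{i}z=0$ gives
\[
\|x_{i}-z\|^{2}\le\|x_{i-1}-z\|^{2}-\frac{1-\alpha_{i}}{\alpha_{i}}\|x_{i-1}-x_{i}\|^{2}.
\]
Summing telescopically and using $x_{n}=x_{0}$ forces $\sum_{i=1}^{n}\frac{1-\alpha_{i}}{\alpha_{i}}\|x_{i-1}-x_{i}\|^{2}=0$. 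As each coefficient is strictly positive, every $x_{i-1}=x_{i}$, whence $x_{0}=\cdots=x_{n}=x$ and therefore $T_{i}x=T_{i}x_{i-1}=x_{i}=x$ for all $i$, i.e. $x\in\cap_{i=1}^{n}\mathrm{Fix}(T_{i})$.
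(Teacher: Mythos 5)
Your argument is correct. Note, however, that the paper itself does not prove this theorem -- it simply cites Theorem 2.2.42 and Theorem 2.1.26 of Cegielski's book -- so what you have produced is a self-contained replacement for an external reference rather than an alternative to an in-paper argument. Your route is essentially the standard one: the characterization of an $\alpha$-averaged operator $S$ by the inequality $\Vert Sx-Sy\Vert^{2}\le\Vert x-y\Vert^{2}-\frac{1-\alpha}{\alpha}\Vert(x-Sx)-(y-Sy)\Vert^{2}$ is exactly the identity one gets by expanding $\Vert(1-\alpha)a+\alpha b\Vert^{2}$ as you indicate, and both the two-operator composition step and the telescoping fixed-point argument are the classical proofs (cf.\ Bauschke--Combettes, Props.~4.46 and 4.49). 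The one place where you deviate from the textbook treatment is the recombination step: using $\Vert a+b\Vert^{2}\le2\Vert a\Vert^{2}+2\Vert b\Vert^{2}$ with $\beta:=\min\{(1-\alpha_{1})/\alpha_{1},(1-\alpha_{2})/\alpha_{2}\}$ gives a non-sharp averagedness constant $\alpha=2/(2+\beta)$, whereas the sharp value is $\alpha=(\alpha_{1}+\alpha_{2}-2\alpha_{1}\alpha_{2})/(1-\alpha_{1}\alpha_{2})$, obtained by a weighted (rather than crude) convexity estimate on $\Vert a+b\Vert^{2}$. Since the theorem only asks for \emph{some} $\lambda\in(0,2)$, your cruder bound is entirely adequate, and the translation back and forth through Corollary~\ref{2.2.17} ($\lambda=2\alpha$) is handled correctly. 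I see no gap.
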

\begin{proof}
See Theorem 2.2.42 and Theorem 2.1.26 in \cite{key-11}.
\end{proof}
The following corollary is an immediate consequence of Theorem \ref{thm:2.2.42}
and Corollary \ref{2.2.17}.
\begin{corollary}
\label{compos.averaged}Let $n$ be a positive integer. For each $i=1,\dots,n$,
let $\lambda_{i}\in\left(0,1\right)$ and let $T_{i}:\mathcal{H}\rightarrow\mathcal{H}$
be $\lambda_{i}$-averaged. Then $T:=T_{n}T_{n-1}\cdots T_{1}$ is
a $\lambda$-averaged operator for some $\lambda\in\left(0,1\right)$.
In addition, if $\cap_{i=1}^{n}\mathrm{Fix}(T_{i})\not=\emptyset$,
then $\mathrm{Fix}(T)=\cap_{i=1}^{n}\mathrm{Fix}(T_{i})$.
\end{corollary}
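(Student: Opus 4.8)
The plan is to reduce everything to Theorem \ref{thm:2.2.42} by passing back and forth between the two equivalent descriptions supplied by Corollary \ref{2.2.17}. The point is that averagedness and relaxed firm nonexpansiveness are simply two coordinates for describing the same class of operators, so no genuinely new estimate is needed; the whole content is parameter bookkeeping, with the substantive work already done in the quoted results.

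Concretely, I would first fix $i\in\{1,\dots,n\}$ and observe that since $T_{i}$ is $\lambda_{i}$-averaged with $\lambda_{i}\in(0,1)$, the number $\mu_{i}:=2\lambda_{i}$ lies in $(0,2)$, and Corollary \ref{2.2.17} (read from right to left, with $\lambda$ there equal to $\mu_{i}$ and $2^{-1}\mu_{i}=\lambda_{i}$) tells us that $T_{i}$ is $\mu_{i}$-relaxed firmly nonexpansive. Thus the hypotheses of Theorem \ref{thm:2.2.42} are met by the family $\{T_{i}\}_{i=1}^{n}$ with parameters $\mu_{i}\in(0,2)$, and that theorem yields some $\mu\in(0,2)$ for which $T=T_{n}T_{n-1}\cdots T_{1}$ is $\mu$-relaxed firmly nonexpansive. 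Finally I would apply Corollary \ref{2.2.17} once more, now from left to right, to conclude that $T$ is $(2^{-1}\mu)$-averaged; setting $\lambda:=2^{-1}\mu\in(0,1)$ gives the asserted averagedness of the composition.

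For the statement on fixed points, the key remark is that re-labeling $T_{i}$ as ``$\mu_{i}$-relaxed firmly nonexpansive'' does not alter the operator itself, and hence does not alter $\mathrm{Fix}(T_{i})$; likewise the conclusion that $T$ is $\lambda$-averaged concerns the very same map $T=T_{n}\cdots T_{1}$. Consequently, under the additional assumption $\cap_{i=1}^{n}\mathrm{Fix}(T_{i})\neq\emptyset$, the corresponding clause of Theorem \ref{thm:2.2.42} applies verbatim to this family and gives $\mathrm{Fix}(T)=\cap_{i=1}^{n}\mathrm{Fix}(T_{i})$.

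I do not anticipate a real obstacle here, since this is exactly the ``immediate consequence'' the preceding sentence advertises; the only place demanding a moment of care is verifying that the parameter ranges match up correctly under Corollary \ref{2.2.17}, namely that $\lambda_{i}\in(0,1)$ corresponds to $\mu_{i}=2\lambda_{i}\in(0,2)$ and, in the reverse direction, that $\mu\in(0,2)$ returns $\lambda=2^{-1}\mu\in(0,1)$, so that the resulting operator is genuinely averaged rather than merely relaxed nonexpansive.
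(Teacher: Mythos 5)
Your proposal is correct and is exactly the argument the paper intends: the corollary is stated as an immediate consequence of Theorem \ref{thm:2.2.42} and Corollary \ref{2.2.17}, and your translation $\lambda_{i}\mapsto\mu_{i}=2\lambda_{i}\in(0,2)$, application of Theorem \ref{thm:2.2.42}, and translation back via $\lambda=2^{-1}\mu\in(0,1)$ is the intended bookkeeping, with the fixed-point identity inherited verbatim.
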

\begin{example}
\label{thm:2.2.21}Let $C\subset\mathcal{H}$ be a nonempty, closed
and convex set, and let $P_{C}:\mathscr{\mathcal{H}\rightarrow\mathcal{H}}$
be the metric projection onto $C$. Then the operator $P_{C}$ is
firmly nonexpansive and hence averaged. For the proof, see Theorem
2.2.21 in \cite{key-11}.
\end{example}
\begin{definition}
\label{SNE}We write that an operator $T:\mathcal{H}\rightarrow\mathcal{H}$
satisfies \textit{Condition $\left(S\right)$ }if for each pair of
sequences $\left\{ x_{n}\right\} _{n=0}^{\infty}$ and $\left\{ y_{n}\right\} _{n=0}^{\infty}$
in $\mathcal{H}$,
\vspace{-.2cm}
\[
\left.\begin{array}{l}
\left\{ x_{n}-y_{n}\right\} _{n=0}^{\infty}\,\,\mathrm{is}\,\mathrm{\,bounded}\\
\left\Vert x_{n}-y_{n}\right\Vert -\left\Vert Tx_{n}-Ty_{n}\right\Vert \rightarrow0
\end{array}\right\} \Longrightarrow x_{n}-y_{n}-\left(Tx_{n}-Ty_{n}\right)\rightarrow0.
\]

The operator $T$ is called \textit{strongly nonexpansive }if it is
nonexpansive and satisfies Condition $\left(S\right)$.
\end{definition}
\begin{theorem}
\label{2.3.4}Let $T:\mathcal{H}\rightarrow\mathcal{H}$ be a firmly
nonexpansive operator and let $\lambda\in\left(0,2\right)$. Then
the $\lambda$-relaxation $T_{\lambda}$ of $T$ is strongly nonexpansive.
\end{theorem}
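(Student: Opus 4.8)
The plan is to verify the two defining ingredients of strong nonexpansiveness in turn: nonexpansiveness and Condition $(S)$. Nonexpansiveness is free: since $T$ is firmly nonexpansive and $\lambda\in\left(0,2\right)\subseteq\left[0,2\right]$, the implication (i)$\Rightarrow$(ii) of Theorem \ref{FNE-equiv.cond} shows at once that $T_{\lambda}$ is nonexpansive. It therefore remains to check Condition $(S)$. So I would fix sequences $\left\{x_{n}\right\}$ and $\left\{y_{n}\right\}$ with $\left\{x_{n}-y_{n}\right\}$ bounded and $\left\Vert x_{n}-y_{n}\right\Vert-\left\Vert T_{\lambda}x_{n}-T_{\lambda}y_{n}\right\Vert\rightarrow0$, and introduce the shorthand $a_{n}:=x_{n}-y_{n}$, $b_{n}:=Tx_{n}-Ty_{n}$ and $d_{n}:=a_{n}-b_{n}$. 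Expanding $T_{\lambda}=\left(1-\lambda\right)\mathrm{Id}+\lambda T$ gives $T_{\lambda}x_{n}-T_{\lambda}y_{n}=a_{n}-\lambda d_{n}$, so the quantity that must be driven to zero, namely $\left(x_{n}-y_{n}\right)-\left(T_{\lambda}x_{n}-T_{\lambda}y_{n}\right)$, is exactly $\lambda d_{n}$. Since $\lambda>0$, it suffices to prove $d_{n}\rightarrow0$.

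The first key step is to squeeze a usable estimate out of firm nonexpansiveness. Substituting $b_{n}=a_{n}-d_{n}$ into the defining inequality $\left\langle a_{n},b_{n}\right\rangle\ge\left\Vert b_{n}\right\Vert^{2}$ and expanding both sides, the $\left\Vert a_{n}\right\Vert^{2}$ terms cancel and the cross terms rearrange to leave precisely
\[
\left\langle a_{n},d_{n}\right\rangle\ge\left\Vert d_{n}\right\Vert^{2}.
\]
This is the only place where the firm nonexpansiveness of $T$, rather than mere nonexpansiveness, enters the argument.

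The second key step is a quadratic estimate combined with a difference-of-squares trick. Expanding $\left\Vert a_{n}-\lambda d_{n}\right\Vert^{2}$ and inserting the inequality just obtained, I would get
\[
\left\Vert a_{n}\right\Vert^{2}-\left\Vert T_{\lambda}x_{n}-T_{\lambda}y_{n}\right\Vert^{2}=2\lambda\left\langle a_{n},d_{n}\right\rangle-\lambda^{2}\left\Vert d_{n}\right\Vert^{2}\ge\lambda\left(2-\lambda\right)\left\Vert d_{n}\right\Vert^{2},
\]
where $\lambda\left(2-\lambda\right)>0$ because $\lambda\in\left(0,2\right)$. To convert the hypothesis into control of the left-hand side, I would factor it as $\left(\left\Vert a_{n}\right\Vert-\left\Vert T_{\lambda}x_{n}-T_{\lambda}y_{n}\right\Vert\right)\left(\left\Vert a_{n}\right\Vert+\left\Vert T_{\lambda}x_{n}-T_{\lambda}y_{n}\right\Vert\right)$: the first factor tends to $0$ by assumption, while the second is bounded, since $T_{\lambda}$ is nonexpansive and $\left\{a_{n}\right\}$ is bounded. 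Hence the product tends to $0$, forcing $\left\Vert d_{n}\right\Vert^{2}\rightarrow0$, and therefore $\lambda d_{n}=\left(x_{n}-y_{n}\right)-\left(T_{\lambda}x_{n}-T_{\lambda}y_{n}\right)\rightarrow0$, as required.

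The computations here are all elementary, so the main obstacle is conceptual rather than technical: one must bridge the gap between the hypothesis, which is phrased in terms of a difference of \emph{norms}, and the natural inner-product estimate, which lives at the level of \emph{squared} norms. The difference-of-squares factorization, together with the boundedness that nonexpansiveness supplies for the sum of the two norms, is exactly the device that closes this gap; identifying the correct reformulation $T_{\lambda}x_{n}-T_{\lambda}y_{n}=a_{n}-\lambda d_{n}$ at the outset is what makes the whole estimate line up cleanly.
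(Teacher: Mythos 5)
Your proof is correct. The paper does not prove Theorem \ref{2.3.4} itself but simply cites Theorem 2.3.4 of \cite{key-11}, and your argument is precisely the standard direct proof given there: the reduction $T_{\lambda}x_{n}-T_{\lambda}y_{n}=a_{n}-\lambda d_{n}$, the inequality $\left\langle a_{n},d_{n}\right\rangle \ge\left\Vert d_{n}\right\Vert ^{2}$ extracted from firm nonexpansiveness, and the bound $\left\Vert a_{n}\right\Vert ^{2}-\left\Vert T_{\lambda}x_{n}-T_{\lambda}y_{n}\right\Vert ^{2}\ge\lambda\left(2-\lambda\right)\left\Vert d_{n}\right\Vert ^{2}$ combined with the difference-of-squares factorization all check out, so you have supplied a complete, self-contained verification of the cited result.
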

\begin{proof}
See Theorem 2.3.4 in \cite{key-11}.
\end{proof}
The following corollary is an immediate consequence of Theorem \ref{2.3.4}
and Corollary \ref{2.2.17}.
\begin{corollary}
\label{averaged is strongly non expansive}Let $T:\mathcal{H}\rightarrow\mathcal{H}$
be an averaged operator. Then $T$ is strongly nonexpansive.
\end{corollary}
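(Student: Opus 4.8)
The plan is to chain together the two results cited immediately before the statement. The point is that ``averaged'' is, up to a harmless rescaling of the relaxation parameter, exactly the class of operators that Corollary~\ref{2.2.17} identifies with $\lambda$-relaxed firmly nonexpansive operators, and Theorem~\ref{2.3.4} has already established strong nonexpansivity for the latter. So the entire content of the proof is to line up the parameter ranges correctly and apply the two results in sequence.

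First I would unfold the definition of ``averaged'': if $T:\mathcal{H}\rightarrow\mathcal{H}$ is averaged, then there exists some $\mu\in\left(0,1\right)$ such that $T$ is $\mu$-averaged, i.e.\ $T$ is a $\mu$-relaxed nonexpansive operator. Setting $\lambda:=2\mu$, I would note that $\mu\in\left(0,1\right)$ forces $\lambda\in\left(0,2\right)$ and $\mu=2^{-1}\lambda$. Next I would invoke the backward implication of Corollary~\ref{2.2.17}, which states that being $\left(2^{-1}\lambda\right)$-averaged is equivalent to being $\lambda$-relaxed firmly nonexpansive. Applying it with this $\lambda$ shows that $T$ is $\lambda$-relaxed firmly nonexpansive; that is, $T=U_{\lambda}$ for some firmly nonexpansive operator $U:\mathcal{H}\rightarrow\mathcal{H}$ and for the above $\lambda\in\left(0,2\right)$.

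Finally, Theorem~\ref{2.3.4} applies verbatim to $U$ and this $\lambda$, yielding that $U_{\lambda}$ is strongly nonexpansive; since $U_{\lambda}=T$, the operator $T$ is strongly nonexpansive, which completes the argument. There is no genuine obstacle here, and the corollary is correctly described as immediate: the only point that needs care is the bookkeeping of constants, namely translating the parameter $\mu\in\left(0,1\right)$ furnished by the definition of ``averaged'' into the parameter $\lambda=2\mu\in\left(0,2\right)$ so that both cited results are invoked within their stated parameter ranges.
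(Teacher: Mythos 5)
Your argument is exactly the one the paper intends: the corollary is introduced as ``an immediate consequence of Theorem~\ref{2.3.4} and Corollary~\ref{2.2.17}'', and your proof simply makes the parameter translation $\mu\mapsto\lambda=2\mu$ explicit before chaining those two results. The proposal is correct and takes essentially the same approach as the paper.
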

\begin{definition}
An operator $T:\mathcal{H}\rightarrow\mathcal{H}$ is \textit{asymptotically
regular }if for each $x\in\mathcal{H}$, we have
\[
\left\Vert T^{n+1}x-T^{n}x\right\Vert \rightarrow0.
\]
\end{definition}
\begin{lemma}
\label{Strongly nonexpansive  is asymptotically regular} Let $T:\mathcal{H}\rightarrow\mathcal{H}$
be a strongly nonexpansive operator such that $\mathrm{Fix}\left(T\right)\not=\emptyset$.
Then $T$ is asymptotically regular.
\end{lemma}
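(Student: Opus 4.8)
The plan is to exploit Condition $(S)$ directly, feeding into it the orbit of $x$ against a fixed point as a constant comparison sequence. So, fix an arbitrary $x\in\mathcal{H}$ and, using the hypothesis $\mathrm{Fix}(T)\neq\emptyset$, pick some $z\in\mathrm{Fix}(T)$. The goal reduces to showing $\bigl\Vert T^{n+1}x-T^{n}x\bigr\Vert\to 0$, and the whole proof hinges on producing two sequences whose norm-gap vanishes so that Condition $(S)$ can be applied.

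First I would establish that the scalar sequence $a_{n}:=\bigl\Vert T^{n}x-z\bigr\Vert$ is monotonically non-increasing. Indeed, since $T$ is nonexpansive and $Tz=z$, we have
\[
\bigl\Vert T^{n+1}x-z\bigr\Vert=\bigl\Vert T\bigl(T^{n}x\bigr)-Tz\bigr\Vert\le\bigl\Vert T^{n}x-z\bigr\Vert,
\]
so $a_{n+1}\le a_{n}$ for all $n$. As $\{a_{n}\}_{n=0}^{\infty}$ is non-increasing and bounded below by $0$, it converges to some limit $\ell\ge 0$; in particular $a_{n+1}-a_{n}\to 0$. Note also that $\{a_{n}\}$ is bounded (by $a_{0}=\Vert x-z\Vert$).

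Next I would apply Condition $(S)$ with the specific choice $x_{n}:=T^{n}x$ and $y_{n}:=z$ for every $n\in\mathbb{N}$. Then $x_{n}-y_{n}=T^{n}x-z$, which is bounded because $\Vert x_{n}-y_{n}\Vert=a_{n}$ is a convergent scalar sequence. Moreover, using $Tz=z$ again, $Tx_{n}-Ty_{n}=T^{n+1}x-z$, so
\[
\bigl\Vert x_{n}-y_{n}\bigr\Vert-\bigl\Vert Tx_{n}-Ty_{n}\bigr\Vert=a_{n}-a_{n+1}\to 0.
\]
Both premises of Condition $(S)$ are thus met, and since $T$ is strongly nonexpansive it satisfies Condition $(S)$; hence $x_{n}-y_{n}-\bigl(Tx_{n}-Ty_{n}\bigr)\to 0$. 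Substituting back, this says $\bigl(T^{n}x-z\bigr)-\bigl(T^{n+1}x-z\bigr)=T^{n}x-T^{n+1}x\to 0$, which is exactly $\bigl\Vert T^{n+1}x-T^{n}x\bigr\Vert\to 0$. As $x\in\mathcal{H}$ was arbitrary, $T$ is asymptotically regular.

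The only genuinely delicate point, and the step I would flag as the crux, is the realization that Condition $(S)$ should be invoked with a \emph{constant} second sequence $y_{n}\equiv z$; once that choice is made, the required boundedness and the vanishing of the norm-gap both follow painlessly from the monotone convergence of $\{a_{n}\}$. Everything else is routine, and no properties beyond nonexpansiveness and Condition $(S)$ are needed.
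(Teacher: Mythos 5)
Your proof is correct and complete: the application of Condition $(S)$ with $x_{n}:=T^{n}x$ and the constant sequence $y_{n}:=z$, combined with the monotone convergence of $a_{n}=\Vert T^{n}x-z\Vert$, is exactly the standard argument behind Lemma 3.4.9 in Cegielski's book, which is all the paper cites in lieu of a proof. No gaps.
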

\begin{proof}
See Lemma 3.4.9 in \cite{key-11}.
\end{proof}
\begin{definition}
\label{demi-closed operator}Let $C$ be nonempty, closed and convex
subset of $\mathcal{H}$. An operator $T:C\rightarrow\mathcal{H}$
is \textit{weakly regular} (satisfies \textit{Opial's demi-closedness
principle, $T-Id$ is demiclosed at $0$)} if for any sequence $\left\{ x_{n}\right\} _{n=0}^{\infty}\subset\mathcal{H}$
and any $x\in\mathcal{H}$, the following implication holds:
\vspace{-.2cm}
\[
\left.\begin{array}{l}
x_{n}\rightharpoonup x\\
Tx_{n}-x_{n}\rightarrow0
\end{array}\right\} \Longrightarrow x\in\mathrm{Fix}(T).
\]
\end{definition}
\begin{lemma}
\label{Nonexpansive is weakly regular} Let $T:\mathcal{H}\rightarrow\mathcal{H}$
be a nonexpansive operator. Then $T$ is weakly regular.
\end{lemma}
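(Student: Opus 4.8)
The plan is to establish this as the classical demiclosedness principle (due to Browder) by a direct algebraic argument, without explicitly invoking Opial's lemma. Fix a sequence $\left\{x_{n}\right\}_{n=0}^{\infty}\subset\mathcal{H}$ and a point $x\in\mathcal{H}$ with $x_{n}\rightharpoonup x$ and $Tx_{n}-x_{n}\rightarrow0$, and set $w:=Tx-x$; the goal is to show that $w=0$, that is, $x\in\mathrm{Fix}(T)$. Two preliminary facts drive the argument: first, weakly convergent sequences are bounded, so $\left\{x_{n}-x\right\}_{n=0}^{\infty}$ is bounded; second, nonexpansiveness supplies, for every $n$, the single inequality $\left\Vert Tx_{n}-Tx\right\Vert \leq\left\Vert x_{n}-x\right\Vert $.

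The key step is to introduce $u_{n}:=Tx_{n}-x_{n}\rightarrow0$ and to use the decomposition
\[
Tx_{n}-Tx=\left(x_{n}-x\right)+u_{n}-w.
\]
Squaring and expanding the inner product yields
\[
\left\Vert Tx_{n}-Tx\right\Vert ^{2}=\left\Vert x_{n}-x\right\Vert ^{2}+\left\Vert u_{n}-w\right\Vert ^{2}+2\left\langle x_{n}-x,\,u_{n}-w\right\rangle .
\]
Combining this with the nonexpansive inequality and cancelling the common term $\left\Vert x_{n}-x\right\Vert ^{2}$ gives, for every $n$,
\[
\left\Vert u_{n}-w\right\Vert ^{2}+2\left\langle x_{n}-x,\,u_{n}-w\right\rangle \leq0.
\]
I would then pass to the limit as $n\rightarrow\infty$: since $u_{n}\rightarrow0$ strongly we have $\left\Vert u_{n}-w\right\Vert ^{2}\rightarrow\left\Vert w\right\Vert ^{2}$, while the cross term $\left\langle x_{n}-x,\,u_{n}\right\rangle$ tends to $0$ (boundedness of $\left\{x_{n}-x\right\}$ against $u_{n}\rightarrow0$) and $\left\langle x_{n}-x,\,w\right\rangle$ tends to $0$ ($x_{n}-x\rightharpoonup0$ against the fixed vector $w$). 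The displayed inequality therefore collapses to $\left\Vert w\right\Vert ^{2}\leq0$, forcing $w=0$ and hence $Tx=x$.

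The only delicate point is the passage to the limit, where one must exploit \emph{both} modes of convergence simultaneously: the strong convergence $u_{n}\rightarrow0$ controls $\left\langle x_{n}-x,\,u_{n}\right\rangle$ through the boundedness of $\left\{x_{n}\right\}$, whereas the weak convergence $x_{n}\rightharpoonup x$ is exactly what annihilates $\left\langle x_{n}-x,\,w\right\rangle$. Selecting the correct decomposition of $Tx_{n}-Tx$ at the outset is what makes these two mechanisms align so that the cross term vanishes; everything else is routine Hilbert-space algebra.
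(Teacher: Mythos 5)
Your proof is correct. Note first that the paper does not actually prove this lemma: it simply cites Lemma 3.2.5 of Cegielski's book, so your argument is a self-contained replacement rather than a variant of an in-paper proof. The argument itself is the standard Hilbert-space proof of Browder's demiclosedness principle, and every step checks out: the identity $Tx_{n}-Tx=\left(x_{n}-x\right)+u_{n}-w$ with $u_{n}=Tx_{n}-x_{n}$ and $w=Tx-x$ is algebraically exact; expanding the square and cancelling $\left\Vert x_{n}-x\right\Vert ^{2}$ against the nonexpansiveness bound gives
\[
\left\Vert u_{n}-w\right\Vert ^{2}+2\left\langle x_{n}-x,\,u_{n}-w\right\rangle \leq0
\]
for every $n$; and the limit passage is handled correctly, with the term $\left\langle x_{n}-x,\,u_{n}\right\rangle$ killed by Cauchy--Schwarz (boundedness of the weakly convergent sequence against the strongly null $u_{n}$) and the term $\left\langle x_{n}-x,\,w\right\rangle$ killed by weak convergence against the fixed vector $w$, yielding $\left\Vert w\right\Vert ^{2}\leq0$. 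What your approach buys is transparency and self-containment at essentially no cost; it is genuinely elementary and uses nothing beyond the parallelogram-type expansion of the inner product, whereas the cited source embeds the result in a more general framework. If you wanted to match the paper's economy you could simply cite the demiclosedness principle, but as a proof of the lemma your argument stands on its own.
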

\begin{proof}
See Lemma 3.2.5 in \cite{key-11}.
\end{proof}
Next, we recall the following generalization of Opial's Theorem, which
we will use in the sequel.
\begin{theorem}
\label{GenOpial}Let $C\subset\mathcal{H}$ be a nonempty, closed
and convex set, and let $S:C\rightarrow\mathcal{H}$ be a weakly regular
operator such that $\mathrm{Fix}\left(S\right)\not=\emptyset$. Assume
that $x_{0}\in C$ is arbitrary and $\left\{ T_{n}\right\} _{n=0}^{\infty}$
is a sequence of quasi-nonexpansive operators $T_{n}:C\rightarrow C$
such that $\mathrm{Fix}\left(S\right)\subset\cap_{n=0}^{\infty}\mathrm{Fix}(T_{n})$.
If the sequence $\left\{ x_{n}\right\} _{n=0}^{\infty}$ generated
by the recurrence $x_{n}=T_{n-1}\left(x_{n-1}\right)$ satisfies
\[
\left\Vert Sx_{n}-x_{n}\right\Vert \rightarrow0,
\]
then it converges weakly to a point $x_{*}\in\mathrm{Fix}\left(S\right)$.
\end{theorem}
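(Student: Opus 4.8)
The plan is to follow the classical Opial template in three stages: establish Fej\'er monotonicity (and thereby boundedness), identify every weak cluster point as a fixed point of $S$, and finally upgrade subsequential weak convergence to convergence of the whole sequence.

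First I would check that $\left\{ x_{n}\right\} _{n=0}^{\infty}\subset C$ and that it is Fej\'er monotone with respect to $\mathrm{Fix}(S)$. Since $x_{0}\in C$ and each $T_{n}$ maps $C$ into $C$, an immediate induction gives $x_{n}\in C$ for all $n$. Fix any $z\in\mathrm{Fix}(S)$. By hypothesis $z\in\cap_{n=0}^{\infty}\mathrm{Fix}(T_{n})$, so every $T_{n}$ is quasi-nonexpansive with $z$ as a fixed point, whence
\[
\left\Vert x_{n+1}-z\right\Vert =\left\Vert T_{n}x_{n}-z\right\Vert \le\left\Vert x_{n}-z\right\Vert .
\]
Thus $\left\{ \left\Vert x_{n}-z\right\Vert \right\} _{n=0}^{\infty}$ is nonincreasing, hence convergent, for every $z\in\mathrm{Fix}(S)$; in particular $\left\{ x_{n}\right\} _{n=0}^{\infty}$ is bounded.

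Next, since $\mathcal{H}$ is a Hilbert space and $\left\{ x_{n}\right\} _{n=0}^{\infty}$ is bounded, it admits weakly convergent subsequences. Let $x_{n_{k}}\rightharpoonup x_{*}$ be any such subsequence. Because $C$ is closed and convex, it is weakly closed, so $x_{*}\in C$ and $Sx_{*}$ is defined. The assumption $\left\Vert Sx_{n}-x_{n}\right\Vert \rightarrow0$ passes to the subsequence, and since $S$ is weakly regular, Definition \ref{demi-closed operator} applied to $x_{n_{k}}\rightharpoonup x_{*}$ together with $Sx_{n_{k}}-x_{n_{k}}\rightarrow0$ yields $x_{*}\in\mathrm{Fix}(S)$. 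Hence every weak cluster point of $\left\{ x_{n}\right\} _{n=0}^{\infty}$ lies in $\mathrm{Fix}(S)$. This identification step, which is the only place where the weak regularity of $S$ and the residual condition $\left\Vert Sx_{n}-x_{n}\right\Vert \rightarrow0$ are genuinely used, is the crux of the argument; the rest is soft.

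Finally, I would invoke the standard uniqueness argument. Suppose $x_{*}$ and $y_{*}$ are two weak cluster points, both in $\mathrm{Fix}(S)$ by the previous step. By the first step, both $\left\{ \left\Vert x_{n}-x_{*}\right\Vert \right\} $ and $\left\{ \left\Vert x_{n}-y_{*}\right\Vert \right\} $ converge, so does their difference, and the identity
\[
\left\Vert x_{n}-x_{*}\right\Vert ^{2}-\left\Vert x_{n}-y_{*}\right\Vert ^{2}=2\left\langle x_{n},y_{*}-x_{*}\right\rangle +\left\Vert x_{*}\right\Vert ^{2}-\left\Vert y_{*}\right\Vert ^{2}
\]
shows that $\left\langle x_{n},y_{*}-x_{*}\right\rangle $ converges. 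Taking the limit along a subsequence weakly converging to $x_{*}$ and along one weakly converging to $y_{*}$ and equating the two values gives $\left\langle x_{*}-y_{*},y_{*}-x_{*}\right\rangle =0$, i.e. $\left\Vert x_{*}-y_{*}\right\Vert =0$, so $x_{*}=y_{*}$. Therefore $\left\{ x_{n}\right\} _{n=0}^{\infty}$ has a unique weak cluster point $x_{*}\in\mathrm{Fix}(S)$; being bounded, it converges weakly to $x_{*}$, which completes the proof.
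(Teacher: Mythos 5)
Your proof is correct. The paper itself does not prove Theorem \ref{GenOpial}; it simply cites Theorem 3.6.2 and Remark 3.6.4 in the book by Cegielski, and your three-step argument (Fej\'er monotonicity with respect to $\mathrm{Fix}(S)$ via quasi-nonexpansiveness, identification of weak cluster points through weak regularity, and Opial's uniqueness trick) is precisely the standard proof underlying that cited result, so you have in effect supplied the self-contained argument the paper outsources.
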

\begin{proof}
See Theorem 3.6.2 along with Remark 3.6.4 in \cite{key-11}.
\end{proof}
The following theorem is an immediate consequence of Theorem \ref{GenOpial}.
It is a well-known theorem established by Opial in \cite{key-18}.
\begin{theorem}
\label{Opial}Let $C\subset\mathcal{H}$ be a nonempty, closed and
convex set. If $T:C\rightarrow C$ is a nonexpansive and asymptotically
regular operator with $\mathrm{Fix}\left(T\right)\not=\emptyset$,
then for any $x_{0}\in C$, the sequence $\left\{ x_{n}\right\} _{n=0}^{\infty}$
generated by the recurrence $x_{n}=T\left(x_{n-1}\right)$ converges
weakly to a point $x_{*}\in\mathrm{Fix}\left(T\right)$.
\end{theorem}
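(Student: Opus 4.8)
The plan is to obtain Theorem \ref{Opial} as a direct specialization of Theorem \ref{GenOpial}, applied with the single operator $S := T$ and with the \emph{constant} sequence $T_n := T$ for every $n\in\mathbb{N}$. First I would check that each hypothesis of Theorem \ref{GenOpial} holds in this setting. Since $T$ is nonexpansive, Lemma \ref{Nonexpansive is weakly regular} shows that $S=T$ is weakly regular, and by assumption $\mathrm{Fix}(S)=\mathrm{Fix}(T)\neq\emptyset$. Because $T$ is nonexpansive and has a fixed point, Remark \ref{NE is QNE} guarantees that each $T_n=T$ is quasi-nonexpansive; moreover $T$ maps $C$ into $C$, so each $T_n:C\to C$, and trivially $\mathrm{Fix}(S)\subset\cap_{n=0}^{\infty}\mathrm{Fix}(T_n)=\mathrm{Fix}(T)$. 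Finally, with $T_n=T$ the recurrence $x_n=T_{n-1}(x_{n-1})$ of Theorem \ref{GenOpial} is exactly the iteration $x_n=T(x_{n-1})$ in the statement, so the two sequences coincide.

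The only substantive point to verify, and the step where I expect the asymptotic regularity hypothesis to be used, is the condition $\left\Vert Sx_n-x_n\right\Vert\rightarrow0$. Here I would first note, by a straightforward induction on $n$, that the iterates satisfy $x_n=T^{n}x_0$. Consequently
\[
\left\Vert Sx_n-x_n\right\Vert=\left\Vert Tx_n-x_n\right\Vert=\left\Vert T^{n+1}x_0-T^{n}x_0\right\Vert,
\]
and the right-hand side tends to $0$ precisely because $T$ is asymptotically regular. This is the one place the regularity assumption enters.

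With every hypothesis confirmed, Theorem \ref{GenOpial} applies and immediately yields that $\{x_n\}_{n=0}^{\infty}$ converges weakly to some point $x_*\in\mathrm{Fix}(S)=\mathrm{Fix}(T)$, which is the desired conclusion. Since the argument amounts to matching the general framework to the classical one, there is no real obstacle; the only care required is in the identification $x_n=T^{n}x_0$ and in invoking asymptotic regularity to supply the convergence $\left\Vert Tx_n-x_n\right\Vert\rightarrow0$ demanded by Theorem \ref{GenOpial}.
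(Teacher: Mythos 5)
Your proposal is correct and follows exactly the route the paper intends: the paper presents Theorem \ref{Opial} as an immediate consequence of Theorem \ref{GenOpial}, and your specialization $S:=T$, $T_n:=T$, together with Lemma \ref{Nonexpansive is weakly regular}, Remark \ref{NE is QNE}, and asymptotic regularity supplying $\left\Vert Tx_n-x_n\right\Vert\rightarrow 0$, is precisely that derivation spelled out in detail.
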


\section{\label{sec2}\color{black}Introduction} \color{black}
The \textit{Convex Feasibility Problem} (CFP) in $\mathcal{H}$ is
the problem of finding a point $x_{*}\in\cap_{\alpha\in I}C_{\alpha}$,
where $\left\{ C_{\alpha}\right\} _{\alpha\in I}$ is a family of
nonempty, closed and convex subsets of $\mathcal{H}$. There are numerous
iterative methods for solving this problem in the literature (see,
for example, \cite{key-2,key-4,key-11} and references therein). The
Douglas-Rachford (DR) algorithm for a finite family of sets, originally
introduced by Douglas and Rachford in \cite{key-12} for solving the
heat equation, is one such method. It has attracted considerable
interest in the last few years. Many investigations of this algorithm
have recently been undertaken in diverse directions, as one can observe,
for example, in \cite{bcm19,ies21,key-6,key-13,key-15} and references therein.
Borwein and Tam presented in \cite{key-3} a cyclic-DR algorithm for
CFPs in which the number of sets is allowed to be greater than $2$.
In this DR algorithm the original one is applied over subsequent pairs
of sets. Artacho {\it et al.} provided in \cite{key-19} a more general
framework for the cyclic DR algorithm, where the proper employment
of certain sets and operators in this algorithm is studied. In the
present paper we offer a further extension of these ideas by introducing
an \color{black}unrestricted DR algorithm, where the sets are chosen (almost) unrestrictedly, \color{black}that is, our DR-algorithm is  (almost) unrestricted in the sense of the choice of sets which are used to obtain its outcome. \color{black}
We begin by recalling the notion of an $r$-set Douglas-Rachford ($r$-set
DR) operator, which was defined in \cite{key-15} in the following
way.
\begin{definition}
Given a finite family $\left\{ C_{i}\right\} _{i=1}^{r}$ of nonempty,
closed and convex subsets of $\mathcal{H}$, the \textit{composite
reflection} operator \color{black}$\mathcal{V}_{\left\{ C_{i}\right\} _{i=1}^{r}}:\mathcal{H}\rightarrow\mathcal{H}$ \color{black}
is defined by
\begin{equation}
\mathcal{V}_{\left\{ C_{i}\right\} _{i=1}^{r}}:=\mathcal{R}_{C_{r}}\dots\mathcal{R}_{C_{1}},\label{eq:-6}
\end{equation}
where $\mathcal{R}_{C_{i}}=2P_{C_{i}}-Id$ is the reflection with
respect to the corresponding $C_{i}$ for each $i=1,\dots,r$.
\color{black}The \textit{$r$-set DR }\color{black} operator $\mathcal{T}_{\left\{ C_{i}\right\} _{i=1}^{r}}:\mathcal{H}\rightarrow\mathcal{H}$
is defined by
\begin{equation}
\mathcal{T}_{\left\{ C_{i}\right\} _{i=1}^{r}}:=2^{-1}\left(Id+\mathcal{V}_{\left\{ C_{i}\right\} _{i=1}^{r}}\right).\label{eq:-7}
\end{equation}
\end{definition}
We recall the following property of the $r$-set DR operator established
in \cite{key-15}.
\begin{lemma}
\label{Fix r-sets operator}Let $\left\{ C_{i}\right\} _{i=1}^{r}$
be a finite family of nonempty, closed and convex subsets of $\mathcal{H}$
such that $\mathrm{int}\cap_{i=1}^{r}C_{i}\not=\emptyset$. Then
\[
\mathrm{Fix}\left(\mathcal{T}_{\left\{ C_{i}\right\} _{i=1}^{r}}\right)=\cap_{i=1}^{r}C_{i}.
\]
\end{lemma}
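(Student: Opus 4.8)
The plan is to first reduce the claim about $\mathcal{T}_{\left\{ C_{i}\right\} _{i=1}^{r}}$ to one about the composite reflection $\mathcal{V}_{\left\{ C_{i}\right\} _{i=1}^{r}}$, and then prove a two-sided inclusion. Since $\mathcal{T}_{\left\{ C_{i}\right\} _{i=1}^{r}}=2^{-1}\left(Id+\mathcal{V}_{\left\{ C_{i}\right\} _{i=1}^{r}}\right)$ is precisely the $2^{-1}$-relaxation of $\mathcal{V}_{\left\{ C_{i}\right\} _{i=1}^{r}}$, Remark \ref{Relaxation has the same Fix} yields $\mathrm{Fix}(\mathcal{T}_{\left\{ C_{i}\right\} _{i=1}^{r}})=\mathrm{Fix}(\mathcal{V}_{\left\{ C_{i}\right\} _{i=1}^{r}})$, so it suffices to show $\mathrm{Fix}(\mathcal{V}_{\left\{ C_{i}\right\} _{i=1}^{r}})=\cap_{i=1}^{r}C_{i}$. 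The inclusion $\cap_{i=1}^{r}C_{i}\subseteq\mathrm{Fix}(\mathcal{V}_{\left\{ C_{i}\right\} _{i=1}^{r}})$ is immediate and uses no interior hypothesis: if $x\in\cap_{i=1}^{r}C_{i}$, then $P_{C_{i}}x=x$, whence $\mathcal{R}_{C_{i}}x=2P_{C_{i}}x-x=x$ for every $i$, and composing gives $\mathcal{V}_{\left\{ C_{i}\right\} _{i=1}^{r}}x=x$.

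The substantial direction is $\mathrm{Fix}(\mathcal{V}_{\left\{ C_{i}\right\} _{i=1}^{r}})\subseteq\cap_{i=1}^{r}C_{i}$, and this is exactly where the hypothesis $\mathrm{int}\cap_{i=1}^{r}C_{i}\neq\emptyset$ is essential (the conclusion fails without it, since in general the fixed point set of a Douglas--Rachford-type operator is strictly larger than the intersection). I would fix a point $c\in\mathrm{int}\cap_{i=1}^{r}C_{i}$ together with a radius $\rho>0$ such that $c+w\in C_{i}$ for every $i$ whenever $\left\Vert w\right\Vert \le\rho$. Since each $P_{C_{i}}$ is firmly nonexpansive (Example \ref{thm:2.2.21}), its reflection $\mathcal{R}_{C_{i}}=2P_{C_{i}}-Id$, being the $2$-relaxation of $P_{C_{i}}$, is nonexpansive by Theorem \ref{FNE-equiv.cond}; and because $c\in C_{i}$, the point $c$ is a common fixed point of all the $\mathcal{R}_{C_{i}}$. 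Given $x\in\mathrm{Fix}(\mathcal{V}_{\left\{ C_{i}\right\} _{i=1}^{r}})$, I would track the orbit $x_{0}:=x$ and $x_{i}:=\mathcal{R}_{C_{i}}x_{i-1}$ for $i=1,\dots,r$, noting $x_{r}=\mathcal{V}_{\left\{ C_{i}\right\} _{i=1}^{r}}x=x=x_{0}$. Applying nonexpansiveness of each $\mathcal{R}_{C_{i}}$ at the common fixed point $c$ gives the chain $\left\Vert x_{0}-c\right\Vert \ge\left\Vert x_{1}-c\right\Vert \ge\cdots\ge\left\Vert x_{r}-c\right\Vert =\left\Vert x_{0}-c\right\Vert$, which must collapse to the equalities $\left\Vert x_{i}-c\right\Vert =\left\Vert x_{i-1}-c\right\Vert$ for all $i$.

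The final step converts each such norm equality into membership. Writing $p_{i}:=P_{C_{i}}x_{i-1}$, using $c=P_{C_{i}}c$ and expanding $\left\Vert \mathcal{R}_{C_{i}}x_{i-1}-c\right\Vert ^{2}=\left\Vert 2(p_{i}-c)-(x_{i-1}-c)\right\Vert ^{2}$, the norm equality forces the equality case of firm nonexpansiveness, namely $\left\langle x_{i-1}-p_{i},\,p_{i}-c\right\rangle =0$. The main obstacle is to upgrade this orthogonality to $x_{i-1}=p_{i}$, i.e. $x_{i-1}\in C_{i}$, and here the interior hypothesis enters decisively: the variational characterization of the projection gives $\left\langle x_{i-1}-p_{i},\,(c+w)-p_{i}\right\rangle \le0$ for all $\left\Vert w\right\Vert \le\rho$, which combined with the orthogonality above reduces to $\left\langle x_{i-1}-p_{i},\,w\right\rangle \le0$ for all such $w$; choosing $w$ parallel to $x_{i-1}-p_{i}$ then forces $x_{i-1}=p_{i}$. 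Consequently $\mathcal{R}_{C_{i}}x_{i-1}=x_{i-1}$, so $x_{i}=x_{i-1}$, and iterating over $i$ shows $x=x_{0}=x_{1}=\cdots=x_{r}$ with $x=x_{i-1}\in C_{i}$ for every $i$. Hence $x\in\cap_{i=1}^{r}C_{i}$, which completes the reverse inclusion and the proof.
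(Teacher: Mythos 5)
Your proof is correct. Note, however, that the paper does not actually prove this lemma: its ``proof'' is a one-line citation to Corollary 23 of Censor and Mansour \cite{key-15}, so any self-contained argument is necessarily a different route. Your argument is a clean, elementary one and all the steps check out: the reduction $\mathrm{Fix}(\mathcal{T})=\mathrm{Fix}(\mathcal{V})$ via Remark \ref{Relaxation has the same Fix}, the trivial inclusion $\cap_{i=1}^{r}C_{i}\subseteq\mathrm{Fix}(\mathcal{V})$, the collapse of the chain of inequalities $\Vert x_{0}-c\Vert\ge\Vert x_{1}-c\Vert\ge\cdots\ge\Vert x_{r}-c\Vert=\Vert x_{0}-c\Vert$ at the common fixed point $c$, the expansion $\Vert 2(p_{i}-c)-(x_{i-1}-c)\Vert^{2}=\Vert x_{i-1}-c\Vert^{2}$ yielding $\langle x_{i-1}-p_{i},\,p_{i}-c\rangle=0$, and the use of the interior ball $c+w$, $\Vert w\Vert\le\rho$, in the variational inequality $\langle x_{i-1}-p_{i},\,z-p_{i}\rangle\le0$ to force $x_{i-1}=p_{i}$. (The phrase ``equality case of firm nonexpansiveness'' is a slight mislabel --- what you derive is the equality case in the nonexpansiveness of the reflection, via the identity for $\Vert\mathcal{R}_{C_{i}}x_{i-1}-c\Vert^{2}$ --- but the computation itself is right.) Compared with deferring to \cite{key-15}, your approach buys a transparent, self-contained proof that isolates exactly where the hypothesis $\mathrm{int}\cap_{i=1}^{r}C_{i}\neq\emptyset$ is used, at the cost of redoing work that the cited corollary packages; it also makes visible that the forward inclusion needs no interiority at all, which is consistent with how Lemma \ref{lem2} in the paper uses the two inclusions separately.
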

\begin{proof}
See Corollary 23 in \cite{key-15}.
\end{proof}
Given a finite family $\left\{ C_{i}\right\} _{i=1}^{m}$ of nonempty,
closed and convex subsets of $\mathcal{H}$, a mapping $f$ from $\mathbb{N}$
onto $\left\{ 1,\dots,m\right\} $ and an integer $r>1$, set $\left\{ C_{m,r}\left(n\right)\left(j\right)\right\} _{j=1}^{r}$
to be the finite family of subsets of $\mathcal{H}$ defined by
\begin{equation}
C_{m,r}\left(n\right)\left(j\right):=C_{f\left(\left(r-1\right)n+j-1\right)}\label{eq:-5}
\end{equation}
for each $j=1,\dots,r$ and each natural number $n$. Define a sequence
of \color{black}unrestricted \color{black} $r$-set DR operators $\left\{ S_{n}\right\} _{n=0}^{\infty}$
by
\begin{equation}
S_{n}:=\mathcal{T}_{\left\{ C_{m,r}\left(n\right)\left(j\right)\right\} _{j=1}^{r}}\label{eq:-3}
\end{equation}
\color{black}for each $n\in \mathbb{N}$. \color{black}For an arbitrary point $x_{0}\in\mathcal{H}$, we intend to analyze
the convergence of the sequence $\left\{ x_{n}\right\} _{n=0}^{\infty}$
generated by the following recurrence:
\begin{gather}
\color{black}x_{n}:=S_{n-1}\left(x_{n-1}\right)\label{eq:}.\color{black}
\end{gather}
The sequence $\left\{ x_{n}\right\} _{n=0}^{\infty}$ is the outcome
of our \color{black}unrestricted \color{black} DR algorithm.

In the setting above, let $j_{f}$ be a natural number such that
\begin{equation}
f\left(\left\{ 1,\dots,j_{f}\right\} \right)=\left\{ 1,\dots,m\right\} .\label{eq:-8}
\end{equation}
We define a \textit{composite \color{black}unrestricted \color{black} DR} operator $Q:\mathcal{H}\rightarrow\mathcal{H}$
by
\begin{equation}
Q:=S_{j_{f}}\dots S_{0}\label{eq:-4}
\end{equation}
and study the convergence properties of the sequence $\left\{ y_{n}\right\} _{n=0}^{\infty}$
generated by the following recurrence, for an arbitrary point $y_{0}\in\mathcal{H}$:
\begin{gather}
y_{n}:=Q\left(y_{n-1}\right)\label{eq:-1}
\end{gather}
This sequence is the
outcome of an alternative \color{black}unrestricted \color{black} DR algorithm.

\color{black}For a finite (possibly empty) family of operators $\left\{ T_{i}\right\} _{i=p}^{q}$ defined on $\mathcal{H}$, where $p$ and $q$ are integers, recall that for each integer $q^{\prime}\le q$, the product (composition) of $\left\{ T_{i}\right\} _{i=p}^{q^{\prime}}$, which we denote by $T_{q}\cdots T_{p}$ (it is  sometimes denoted in the literature also by $\prod_{i=p}^{q}T_{i}$) is defined by the following recurrence:
\[
T_{q^{\prime}}\cdots T_{p}:=\begin{cases}
Id & \mathrm{if}\,q^{\prime}<p\\
T_{q^{\prime}}\circ\left(T_{q^{\prime}-1}\cdots T_{p}\right) & \mathrm{if}\,q^{\prime}\ge p
\end{cases}.
\]\color{black}
Recall also that the \textit{\color{black}unrestricted \color{black} product} of a family $\left\{ T_{\alpha}\right\} _{i\in I}$
of operators determined by a mapping $h:\mathbb{N}\rightarrow I$
is the sequence of operators $\left\{ P_{n}\right\} _{n=0}^{\infty}$
defined by $P_{n}:=T_{h\left(n\right)}\dots T_{h\left(0\right)}$
for each $n\in\mathbb{N}$. For more information and applications
concerning \color{black}unrestricted \color{black} products, we refer the reader to, for example, \cite{key-10}
and \cite{key-1}.

The following theorem concerning the convergence of an \color{black}unrestricted \color{black} product
of two strongly nonexpansive operators was established in \cite{key-1}.
It provides a powerful tool for finding a common fixed point of two
such operators.
\begin{theorem}
\label{DR Theorem}Let $T_{1}$ and $T_{2}$ be two strongly nonexpansive
operators such that $\mathrm{Fix}(T_{1})\cap\mathrm{Fix}(T_{2})\not=\emptyset$.
Assume $h:\mathbb{N\rightarrow}\left\{ 1,2\right\} $ is a mapping
such that $h^{-1}\left(\left\{ i\right\} \right)$ is an infinite
set for each $i=1,2$. Then for each $x\in\mathcal{H}$, the sequence
$\left\{ P_{n}x\right\} _{n=0}^{\infty}$, where $\left\{ P_{n}\right\} _{n=0}^{\infty}$
is the \color{black}unrestricted \color{black} product of $\left\{ T_{i}\right\} _{i=1}^{2}$ determined
by the mapping $h$, converges weakly to a common fixed point of $T_{1}$
and $T_{2}$.
\end{theorem}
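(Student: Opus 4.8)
The plan is to combine Fejér monotonicity with the generalized Opial theorem (Theorem \ref{GenOpial}), extracting the asymptotic regularity of the process from Condition $(S)$ (Definition \ref{SNE}). Write $F:=\mathrm{Fix}(T_1)\cap\mathrm{Fix}(T_2)$, fix $x\in\mathcal H$, and set $w_0:=x$, $w_n:=T_{h(n-1)}w_{n-1}$, so that $w_n=P_{n-1}x$ for $n\ge1$ and it suffices to prove that $\{w_n\}$ converges weakly to a point of $F$. First I would record that, since each $T_{h(n-1)}$ is nonexpansive and fixes every $z\in F$, the sequence $\{\|w_n-z\|\}$ is nonincreasing; hence it converges, $\{w_n\}$ is bounded, and $\|w_n-z\|-\|w_{n+1}-z\|\to0$ for every $z\in F$.

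Next I would establish asymptotic regularity of the process, $\|w_{n+1}-w_n\|\to0$. Fixing $z\in F$ and restricting to the infinite set of indices $n$ with $h(n)=1$, I apply Condition $(S)$ for $T_1$ to the sequence $\{w_n\}$ against the constant $z$: the differences are bounded and $\|w_n-z\|-\|T_1w_n-z\|=\|w_n-z\|-\|w_{n+1}-z\|\to0$, whence $w_{n+1}-w_n=(T_1w_n-z)-(w_n-z)\to0$ along these indices. The same argument with $T_2$ covers the indices with $h(n)=2$, and as the two index sets exhaust $\mathbb N$ we get $\|w_{n+1}-w_n\|\to0$ for the full sequence. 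In particular $\|T_iw_n-w_n\|\to0$ along the indices at which $T_i$ is applied, so at each $1\!\to\!2$ transition ($h(n)=1$, $h(n+1)=2$) \emph{both} residuals $\|T_1w_n-w_n\|$ and $\|T_2w_{n+1}-w_{n+1}\|$ tend to $0$, and symmetrically at $2\!\to\!1$ transitions; since each value is taken infinitely often, transitions of both kinds occur infinitely often.

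I would then feed the transition points into Theorem \ref{GenOpial} with $S:=T_2T_1$. As a composition of nonexpansive operators (Remark \ref{ComposNE}), $S$ is nonexpansive, hence weakly regular (Lemma \ref{Nonexpansive is weakly regular}), and $\mathrm{Fix}(S)=F$: the inclusion $F\subseteq\mathrm{Fix}(S)$ is clear, while if $Sx=x$ and $z\in F$ then $\|x-z\|=\|T_2T_1x-z\|\le\|T_1x-z\|\le\|x-z\|$ forces equalities, and Condition $(S)$ applied to $T_1$ and then $T_2$ (with constant sequences) yields $T_1x=x$ and $T_2x=x$. Along the $1\!\to\!2$ transition indices consecutive terms of $\{w_n\}$ are related by compositions of the $T_{h(j)}$, which are quasi-nonexpansive and fix $F=\mathrm{Fix}(S)$, and $\|Sw_n-w_n\|\le\|T_2w_{n+1}-w_{n+1}\|+\|T_1w_n-w_n\|\to0$; thus Theorem \ref{GenOpial} gives that this whole subsequence converges weakly to some $x_*\in F$. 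The analogous argument with $T_1T_2$ on the $2\!\to\!1$ transitions produces a weak limit $x_{**}\in F$, and expanding $\|w_n-x_{**}\|^2$ along the first subsequence and $\|w_n-x_*\|^2$ along the second, using that $\lim_n\|w_n-x_*\|$ and $\lim_n\|w_n-x_{**}\|$ exist, forces $x_*=x_{**}$.

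The hard part is to pass from the transition subsequences to the whole sequence, the obstacle being that $h$ may apply a single operator for arbitrarily long unbroken runs, so a weak cluster point $q$ of $\{w_n\}$ produced deep inside such a run is a priori only known (via Lemma \ref{Nonexpansive is weakly regular}) to lie in the fixed point set of the operator governing that run. To close this gap I would argue by monotonicity within runs. Suppose $w_{m_j}\rightharpoonup q$ with $m_j$ interior $T_1$-indices; then $q\in\mathrm{Fix}(T_1)$, so $\|w_\cdot-q\|$ is nonincreasing along each maximal $T_1$-run. Let $e_j$ be the last index of the run containing $m_j$; then $e_j$ is a $1\!\to\!2$ transition, so $w_{e_j}\rightharpoonup x_*$. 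Writing $\rho:=\lim_n\|w_n-x_*\|$, a short expansion gives $\|w_{m_j}-q\|^2\to\rho^2-\|q-x_*\|^2$ and $\|w_{e_j}-q\|^2\to\rho^2+\|q-x_*\|^2$, while $m_j\le e_j$ and run-monotonicity give $\|w_{e_j}-q\|\le\|w_{m_j}-q\|$, forcing $\|q-x_*\|=0$; the symmetric argument (using $x_{**}=x_*$) handles cluster points along $T_2$-indices. Hence every weak cluster point of the bounded sequence $\{w_n\}$ equals $x_*$, so $w_n\rightharpoonup x_*\in F$, which is the assertion. I expect this run-monotonicity squeeze to be the crux, since it is precisely where strong nonexpansiveness — through the asymptotic regularity it forces — is used to neutralise the unbounded gaps of the unrestricted schedule.
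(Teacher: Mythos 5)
Your proof is correct. Note that the paper does not actually prove Theorem \ref{DR Theorem} --- it only cites Theorem 1 of \cite{key-1} --- so the real comparison is with the detailed argument the paper gives for the quasi-periodic analogue, Theorem \ref{thm6 in DR}. That argument (Lemmas \ref{Lem 3}--\ref{lem 5}) hinges on quasi-periodicity: a pigeonhole extraction of a fixed word $T_{l_M}\cdots T_{l_1}$ of bounded length $M$ recurring along a subsequence, to which Lemma \ref{Lem 4} is applied. That device is unavailable for an unrestricted $h$, which can produce arbitrarily long runs of a single operator, and your argument supplies exactly the missing ingredient. The individual steps all check out: Condition $(S)$ tested against a constant $z\in F$, split over the two index sets $h^{-1}(\{1\})$ and $h^{-1}(\{2\})$, does yield full asymptotic regularity $\Vert w_{n+1}-w_n\Vert\to0$; the identity $\mathrm{Fix}(T_2T_1)=F$ and the estimate $\Vert Sw_n-w_n\Vert\le\Vert T_2w_{n+1}-w_{n+1}\Vert+\Vert T_1w_n-w_n\Vert$ at $1\to2$ transitions legitimately place the transition subsequence within the scope of Theorem \ref{GenOpial}, since the intervening compositions are nonexpansive, fix $F$, and are therefore quasi-nonexpansive; and the run-monotonicity squeeze $\rho^2+\Vert q-x_*\Vert^2=\lim_j\Vert w_{e_j}-q\Vert^2\le\lim_j\Vert w_{m_j}-q\Vert^2=\rho^2-\Vert q-x_*\Vert^2$ is a valid (and, relative to Section \ref{sec4}, genuinely different) way to identify cluster points occurring deep inside a run, where weak regularity alone only places them in one of the two fixed point sets. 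Two small points deserve explicit mention in a final write-up: every run is finite precisely because both preimages $h^{-1}(\{i\})$ are infinite (so the run endpoints $e_j$ exist and both transition types occur infinitely often), and a preliminary pigeonhole reduction is needed to assume $h$ is constant along the cluster subsequence $\{m_j\}$. What your route buys is a self-contained proof of the truly unrestricted two-operator case; what the paper's Section \ref{sec4} machinery buys is an extension to $m>2$ operators, at the price of the quasi-periodicity hypothesis.
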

\begin{proof}
See Theorem 1 in \cite{key-1}.
\end{proof}
It is still an open question, whether this result can be extended
to more than two operators in the most general case. But from the
optimization theory point of view, we can actually choose how often
each operator belonging to a given family of operators should appear
in the concrete method which we apply in order to find a common fixed point
of the operators in this family. It turns out that in the case where each operator of a given
finite family appears in the product sufficiently frequently, Theorem
\ref{DR Theorem} can be extended to an arbitrary finite number of
given operators, as is implied by Theorem \ref{thm6 in DR} below.
Before formulating it, we present the following notion
of quasi-periodicity, which was used in \cite{key-1}. In similar
settings, it is sometimes called intermittency (see, for example,
\cite{key-23} and \cite{key-20}).
\begin{definition}
Assume that $M$ is a positive integer and that $h$ is a mapping
defined on $\mathbb{N}$. We say that $h$ is \textit{$M$-quasi-periodic
}if for each $n\in\mathbb{N}$, we have
\[
h\left(\left\{ n,\dots,n+M-1\right\} \right)=h\left(\mathbb{N}\right).
\]
\end{definition}
Given a finite family $\left\{ T_{i}\right\} _{i=1}^{m}$ of strongly
nonexpansive operators, a positive integer $M$ and an $M$-quasi-periodic
mapping $h$ which is onto $\left\{ 1,\dots,m\right\} $, the following
theorem, which was first proved in \cite{key-1}, establishes the
convergence of the sequence $\left\{ P_{n}x\right\} _{n=0}^{\infty}$,
where $\left\{ P_{n}\right\} _{n=0}^{\infty}$ is the \color{black}unrestricted \color{black} product
of the family $\left\{ T_{i}\right\} _{i=1}^{m}$ determined by $h$
and $x\in\mathcal{H}$ is arbitrary. Due to the high importance of
this theorem and for the convenience of the reader, we provide a detailed
proof of it in Section \ref{sec4}.
\begin{theorem}
\label{thm6 in DR}Assume that $\left\{ T_{i}\right\} _{i=1}^{m}$
is a finite family of strongly nonexpansive operators satisfying $\cap_{i=1}^{m}\mathrm{Fix}(T_{i})\not=\emptyset$,
$M$ is a positive integer and $h$ is an $M$-quasi-periodic mapping
onto $\left\{ 1,\dots,m\right\} $. Then for each $x\in\mathcal{H}$,
the sequence $\left\{ P_{n}x\right\} _{n=0}^{\infty}$ converges weakly
to a common fixed point of the operators $\left\{ T_{i}\right\} _{i=1}^{m}$,
where $\left\{ P_{n}\right\} _{n=0}^{\infty}$ is the \color{black}unrestricted \color{black} product
of $\left\{ T_{i}\right\} _{i=1}^{m}$ determined by $h$.
\end{theorem}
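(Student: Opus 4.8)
The plan is to reduce the statement to a single application of the generalized Opial theorem (Theorem~\ref{GenOpial}) for the weakly regular operator $S := T_m T_{m-1}\cdots T_1$. Write $F := \cap_{i=1}^m \mathrm{Fix}(T_i)$, fix $x\in\mathcal{H}$, set $x_0 := x$ and $x_{n+1} := T_{h(n)} x_n$, so that $x_{n+1} = P_n x$ and the recurrence matches the one in Theorem~\ref{GenOpial}: each $T_{h(n)}$ is nonexpansive and fixes every point of $F$, hence is quasi-nonexpansive by Remark~\ref{NE is QNE}. Since $h$ is onto, $\cap_{n} \mathrm{Fix}(T_{h(n)}) = F$, while $S$ is nonexpansive by Remark~\ref{ComposNE} and therefore weakly regular by Lemma~\ref{Nonexpansive is weakly regular}. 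Thus, once we verify that $\mathrm{Fix}(S) = F$ and that $\|S x_n - x_n\| \to 0$, Theorem~\ref{GenOpial} yields $x_n \rightharpoonup x_* \in F$, and since $P_n x = x_{n+1}$, the sequence $\{P_n x\}$ has the same weak limit.

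First I would establish Fej\'er monotonicity: for every $z \in F$ the sequence $\|x_n - z\|$ is non-increasing, hence convergent, and $\{x_n\}$ is bounded. Next, for each fixed $i$ I would invoke Condition (S) (Definition~\ref{SNE}) for $T_i$ along the subsequence $N_i := \{n : h(n) = i\}$ with the constant comparison sequence $z \in F$: since $\|x_n - z\| - \|x_{n+1} - z\| \to 0$ by convergence of $\|x_n - z\|$, Condition (S) forces $\|x_{n+1} - x_n\| = \|T_i x_n - x_n\| \to 0$ along $N_i$. As $\{1,\dots,m\}$ is finite and $\mathbb{N}$ is the union of the $N_i$, this gives asymptotic regularity $\|x_{n+1} - x_n\| \to 0$ along the whole sequence. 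The $M$-quasi-periodicity of $h$ then upgrades this to $\|T_i x_n - x_n\| \to 0$ for every $i$: within each window $\{n,\dots,n+M-1\}$ some index $n_i$ satisfies $h(n_i) = i$, so $\|T_i x_{n_i} - x_{n_i}\| \to 0$, while $\|x_{n_i} - x_n\| \leq (M-1)\max_{n \leq j < n+M}\|x_{j+1} - x_j\| \to 0$, and a triangle-inequality estimate using nonexpansiveness of $T_i$ yields $\|T_i x_n - x_n\| \to 0$. Feeding these $m$ limits into a telescoping estimate along $u_0 = x_n$, $u_k = T_k u_{k-1}$ (with a short induction on $k$ bounding $\|u_{k-1} - x_n\|$) gives $\|S x_n - x_n\| \to 0$.

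The remaining input is the fixed point identity $\mathrm{Fix}(S) = F$. The inclusion $F \subseteq \mathrm{Fix}(S)$ is immediate. For the reverse inclusion, note that Corollary~\ref{compos.averaged} is unavailable here, since the $T_i$ are only assumed strongly nonexpansive rather than averaged, so I would argue directly from Condition (S). Given $z \in \mathrm{Fix}(S)$ and $p \in F$, nonexpansiveness of each factor forces the chain $\|z - p\| \geq \|T_1 z - p\| \geq \dots \geq \|S z - p\| = \|z - p\|$ to consist entirely of equalities; applying Condition (S) for $T_k$ with the constant sequences $x_n \equiv T_{k-1}\cdots T_1 z$ and $y_n \equiv p$ then collapses each stage, giving $T_k z = z$ for all $k$ and hence $z \in F$.

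I expect the main obstacle to be precisely this passage from partial (subsequence- or window-wise) control to uniform control over the full sequence and over all $m$ operators: Condition (S) speaks only about a single operator, so one must first extract $\|x_{n+1}-x_n\|\to 0$ along each $N_i$ and then use $M$-quasi-periodicity to redistribute it to every $T_i$ at every index. The companion subtlety is that, because strong nonexpansiveness (unlike the averaged case of Corollary~\ref{compos.averaged}) carries no ready-made composition theorem in the excerpt, the identity $\mathrm{Fix}(S)=F$ must be extracted by hand from Condition (S) rather than quoted.
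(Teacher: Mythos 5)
Your proof is correct, but it follows a genuinely different route from the paper's. The paper proves Theorem~\ref{thm6 in DR} by analyzing weak cluster points: Lemma~\ref{Lem 3} extracts a subsequence along which the $M$-window pattern of $h$ is constant, Lemma~\ref{Lem 4} (an induction on the number of factors using Condition~$(S)$ against the zero sequence and Lemma~\ref{Lemma 1 in DR}) shows that every weak cluster point of $\left\{P_n x\right\}$ is a common fixed point (Lemma~\ref{lem 5}), and Lemma~\ref{Lemma 3 in DR} then forces all weak cluster points to coincide. You instead manufacture a single certificate operator $S = T_m\cdots T_1$ and reduce everything to Theorem~\ref{GenOpial}: Fej\'er monotonicity plus Condition~$(S)$ against a fixed common fixed point gives asymptotic regularity along each level set of $h$, quasi-periodicity redistributes this to $\left\Vert T_i x_n - x_n\right\Vert\rightarrow 0$ for every $i$, a telescoping estimate gives $\left\Vert Sx_n - x_n\right\Vert\rightarrow 0$, and a hand-made Condition~$(S)$ argument supplies $\mathrm{Fix}(S)=\cap_{i=1}^m\mathrm{Fix}(T_i)$ (correctly noting that Corollary~\ref{compos.averaged} does not apply to merely strongly nonexpansive factors; your equality-chain argument is the standard Bruck--Reich composition fact and is sound). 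Each step checks out, including the slightly delicate passage from subsequence-wise to uniform control. What your approach buys is economy and a structural insight: it shows that under $M$-quasi-periodicity the operator $S$ whose existence is hypothesized in Theorem~\ref{thm1}\textit{(i)} --- which the paper describes as ``often difficult to verify'' --- always exists and its defining condition $\left\Vert Sx_n-x_n\right\Vert\rightarrow 0$ holds automatically, so the cluster-point uniqueness is absorbed into Theorem~\ref{GenOpial} rather than argued via Lemma~\ref{Lemma 3 in DR}. What the paper's route buys is closer alignment with the original argument of \cite{key-1} and reusable intermediate lemmata (Lemmas~\ref{Lem 3}--\ref{lem 5}) about cluster points of unrestricted products.
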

In order to prove the above theorem we need the following two technical
lemmata.
\begin{lemma}
\label{Lemma 1 in DR}Let $T:\mathcal{H}\rightarrow\mathcal{H}$ be
a nonexpansive mapping and $\left\{ v_{n}\right\} _{n=0}^{\infty}$
be a sequence in $\mathcal{H}$. Suppose $\left\{ v_{n}\right\} _{n=0}^{\infty}$
and $\left\{ Tv_{n}\right\} _{n=0}^{\infty}$ both converge weakly
to some $v\in\mathcal{H}$ and that $\left\Vert v_{n}\right\Vert -\left\Vert Tv_{n}\right\Vert \rightarrow0$.
Then $v\in\mathrm{Fix}(T)$.
\end{lemma}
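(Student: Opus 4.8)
The plan is to prove directly that $\Vert Tv-v\Vert=0$ by exploiting the nonexpansiveness of $T$ together with the weak convergence of both sequences to the same limit $v$. The tempting alternative---to argue that $Tv_{n}-v_{n}\rightarrow0$ strongly and then invoke the weak regularity of $T$ (Lemma \ref{Nonexpansive is weakly regular}) via the demiclosedness principle---runs into trouble: although $v_{n}-Tv_{n}\rightharpoonup0$, upgrading this to strong convergence would require controlling the inner product $\langle Tv_{n},v_{n}\rangle$ of two merely weakly convergent sequences, which need not converge to the product of the weak limits. I therefore avoid this route.

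First I would record two preliminary facts. Since $\left\{ v_{n}\right\} _{n=0}^{\infty}$ and $\left\{ Tv_{n}\right\} _{n=0}^{\infty}$ converge weakly, both are bounded, so $C:=\sup_{n}\left(\Vert v_{n}\Vert+\Vert Tv_{n}\Vert\right)<\infty$. Combining this with the hypothesis $\Vert v_{n}\Vert-\Vert Tv_{n}\Vert\rightarrow0$ and the identity $\Vert v_{n}\Vert^{2}-\Vert Tv_{n}\Vert^{2}=\left(\Vert v_{n}\Vert-\Vert Tv_{n}\Vert\right)\left(\Vert v_{n}\Vert+\Vert Tv_{n}\Vert\right)$ yields $\Vert v_{n}\Vert^{2}-\Vert Tv_{n}\Vert^{2}\rightarrow0$.

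Next I would apply nonexpansiveness at the pair $\left(v_{n},v\right)$: squaring $\Vert Tv_{n}-Tv\Vert\le\Vert v_{n}-v\Vert$ and expanding both sides gives
\[
\Vert Tv_{n}\Vert^{2}-2\langle Tv_{n},Tv\rangle+\Vert Tv\Vert^{2}\le\Vert v_{n}\Vert^{2}-2\langle v_{n},v\rangle+\Vert v\Vert^{2}.
\]
Isolating $\Vert Tv\Vert^{2}$ and passing to the limit, I would use $\Vert v_{n}\Vert^{2}-\Vert Tv_{n}\Vert^{2}\rightarrow0$ together with the weak convergences $v_{n}\rightharpoonup v$ and $Tv_{n}\rightharpoonup v$, which give $\langle v_{n},v\rangle\rightarrow\Vert v\Vert^{2}$ and $\langle Tv_{n},Tv\rangle\rightarrow\langle v,Tv\rangle$. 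The resulting inequality is $\Vert Tv\Vert^{2}\le2\langle v,Tv\rangle-\Vert v\Vert^{2}$, that is, $\Vert Tv-v\Vert^{2}=\Vert Tv\Vert^{2}-2\langle v,Tv\rangle+\Vert v\Vert^{2}\le0$. Hence $Tv=v$ and $v\in\mathrm{Fix}(T)$.

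The one delicate point is the choice of reference point in the nonexpansiveness inequality: testing against the pair $\left(v_{n},v\right)$ rather than against a pair $\left(v_{n},v_{m}\right)$ is what ensures that every term surviving in the limit is an inner product against one of the fixed vectors $v$ or $Tv$, each of which does converge under weak convergence. The norm-gap hypothesis then supplies exactly the one missing estimate, $\Vert v_{n}\Vert^{2}-\Vert Tv_{n}\Vert^{2}\rightarrow0$, that is needed to close the argument.
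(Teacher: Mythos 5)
Your proof is correct: the expansion of $\Vert Tv_{n}-Tv\Vert^{2}\le\Vert v_{n}-v\Vert^{2}$, combined with the boundedness of both sequences, the factorization $\Vert v_{n}\Vert^{2}-\Vert Tv_{n}\Vert^{2}=\left(\Vert v_{n}\Vert-\Vert Tv_{n}\Vert\right)\left(\Vert v_{n}\Vert+\Vert Tv_{n}\Vert\right)$, and the weak convergences $\langle v_{n},v\rangle\rightarrow\Vert v\Vert^{2}$ and $\langle Tv_{n},Tv\rangle\rightarrow\langle v,Tv\rangle$, does yield $\Vert Tv-v\Vert^{2}\le0$ with no gaps. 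The paper itself gives no argument here, deferring entirely to Lemma 1 of the Dye--Reich reference \cite{key-1}, so there is no internal proof to compare against; your computation is the standard direct one, and your side remark correctly identifies why the demiclosedness route via Lemma \ref{Nonexpansive is weakly regular} is unavailable (the cross term $\langle v_{n},Tv_{n}\rangle$ of two weakly convergent sequences cannot be controlled, so $v_{n}-Tv_{n}\rightarrow0$ strongly cannot be inferred).
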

\begin{proof}
See Lemma 1 in \cite{key-1}.
\end{proof}
\begin{lemma}
\label{Lemma 3 in DR}Let $\left\{ T_{\alpha}\right\} _{\alpha\in I}$
be a family of nonexpansive operators and $h:\mathbb{N}\rightarrow I$
be a mapping. Assume that $\left\{ P_{n_{k}}\right\} _{k=0}^{\infty}$
and $\left\{ P_{n_{k}^{\prime}}\right\} _{k=0}^{\infty}$ are two
subsequences of the \color{black}unrestricted \color{black} product $\left\{ P_{n}\right\} _{n=0}^{\infty}$
of $\left\{ T_{\alpha}\right\} _{\alpha\in I}$ determined by $h$
and let $x\in\mathcal{H}$. If $x_{1}$and $x_{2}$ are common fixed
points of the family $\left\{ T_{\alpha}\right\} _{\alpha\in I}$
above such that $P_{n_{k}}x\rightharpoonup x_{1}$ and $P_{n_{k}^{\prime}}x\rightharpoonup x_{2}$,
then $x_{1}=x_{2}$.
\end{lemma}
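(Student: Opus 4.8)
The plan is to exploit the recursive structure of the unrestricted product together with Fej\'er monotonicity, and then to run the standard Opial-type uniqueness argument. First I would note that, since a composition of nonexpansive operators is nonexpansive (Remark \ref{ComposNE}), every $P_{n}=T_{h(n)}\cdots T_{h(0)}$ is nonexpansive, and that any common fixed point $z$ of $\left\{ T_{\alpha}\right\} _{\alpha\in I}$ satisfies $P_{n}z=z$ for all $n\in\mathbb{N}$. The crucial observation is that the unrestricted product is built recursively via $P_{n+1}=T_{h(n+1)}P_{n}$; hence, using $z=T_{h(n+1)}z$ and the nonexpansiveness of $T_{h(n+1)}$,
\[
\left\Vert P_{n+1}x-z\right\Vert =\left\Vert T_{h(n+1)}P_{n}x-T_{h(n+1)}z\right\Vert \le\left\Vert P_{n}x-z\right\Vert .
\]
Thus $\left\{ \left\Vert P_{n}x-z\right\Vert \right\} _{n=0}^{\infty}$ is nonincreasing and bounded below, so it converges for every such $z$.

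Next I would apply this to the two given common fixed points $x_{1}$ and $x_{2}$, obtaining real numbers $\ell_{1}$ and $\ell_{2}$ with $\left\Vert P_{n}x-x_{1}\right\Vert \to\ell_{1}$ and $\left\Vert P_{n}x-x_{2}\right\Vert \to\ell_{2}$ (convergence of the full sequences, not merely of the prescribed subsequences). The key step is then the Hilbert space expansion
\[
\left\Vert P_{n}x-x_{2}\right\Vert ^{2}=\left\Vert P_{n}x-x_{1}\right\Vert ^{2}+2\left\langle P_{n}x-x_{1},\,x_{1}-x_{2}\right\rangle +\left\Vert x_{1}-x_{2}\right\Vert ^{2}.
\]
Passing to the limit along the subsequence $\left\{ n_{k}\right\} $, the weak convergence $P_{n_{k}}x\rightharpoonup x_{1}$ forces the inner-product term to vanish, which yields $\ell_{2}^{2}=\ell_{1}^{2}+\left\Vert x_{1}-x_{2}\right\Vert ^{2}$. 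Interchanging the roles of $x_{1}$ and $x_{2}$ and passing to the limit along $\left\{ n_{k}^{\prime}\right\} $ (using $P_{n_{k}^{\prime}}x\rightharpoonup x_{2}$) gives the symmetric identity $\ell_{1}^{2}=\ell_{2}^{2}+\left\Vert x_{1}-x_{2}\right\Vert ^{2}$. Adding the two identities produces $2\left\Vert x_{1}-x_{2}\right\Vert ^{2}=0$, whence $x_{1}=x_{2}$.

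I expect the only genuinely delicate point to be the first one, namely securing the convergence (and not merely the boundedness) of $\left\{ \left\Vert P_{n}x-z\right\Vert \right\} _{n=0}^{\infty}$ for each common fixed point $z$. This is exactly where the recursive form $P_{n+1}=T_{h(n+1)}P_{n}$ of the unrestricted product, rather than an arbitrary family of nonexpansive maps sharing fixed points, is indispensable. Once this Fej\'er monotonicity is in place, the remainder is the classical argument involving two weak cluster points and consists only of routine manipulations in Hilbert space.
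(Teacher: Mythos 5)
Your proof is correct. The paper does not actually reproduce an argument for this lemma (it simply defers to Lemma 3 of the cited Dye--Reich paper), and your argument --- Fej\'{e}r monotonicity of $\left\{ \left\Vert P_{n}x-z\right\Vert \right\} _{n=0}^{\infty}$ obtained from the recursion $P_{n+1}=T_{h(n+1)}P_{n}$, followed by the standard two-limit Opial-type identity --- is precisely the classical proof of that cited result, so it matches the intended approach and is complete.
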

\begin{proof}
See Lemma 3 in \cite{key-1}.
\end{proof}
The rest of the paper is organized as follows. In Section \ref{sec3}
we formulate our main results, as well as discuss a few remarks and
examples. Section \ref{sec4} is devoted to several auxiliary results. Finally, the proofs of our main results are presented in Section \ref{sec5}.

\section{\label{sec3}Main results}
Below we state our three main theorems. We establish them in Section
\ref{sec5}.
\global\long\def\theenumi{\color{black}\roman{enumi}}%
\begin{theorem}
\label{thm1}Assume that $\left\{ C_{i}\right\} _{i=1}^{m}$ is a
finite family of nonempty, closed and convex subsets of $\mathcal{H}$,
$r>1$ is an integer and $f$ is a mapping from $\mathbb{N}$ onto
$\left\{ 1,\dots,m\right\} $. Suppose that the sequences $\left\{ S_{n}\right\} _{n=0}^{\infty}$
and $\left\{ x_{n}\right\} _{n=0}^{\infty}$ are defined, respectively,
by \eqref{eq:-3} and \eqref{eq:}, $j_{f}$ is defined by \eqref{eq:-8},
$Q$ is defined by \eqref{eq:-4} and $\left\{ y_{n}\right\} _{n=0}^{\infty}$
is defined by \eqref{eq:-1}. Then the following assertions hold:
\begin{enumerate}
\item If there exists $S:\mathcal{H}\rightarrow\mathcal{H}$, which is a
weakly regular operator with a fixed point such that $\mathrm{Fix}(S)\subset\cap_{i=1}^{m}C_{i}$,
and the sequence $\left\{ x_{n}\right\} _{n=0}^{\infty}$ satisfies
\[
\left\Vert Sx_{n}-x_{n}\right\Vert \rightarrow0,
\]
then $\left\{ x_{n}\right\} _{n=0}^{\infty}$ converges weakly to
a point $x_{*}\in\cap_{i=1}^{m}C_{i}$.
\item If $\cap_{i=1}^{m}C_{i}$ has an interior point, then the sequence
$\left\{ y_{n}\right\} _{n=0}^{\infty}$ converges weakly to a point
$y_{*}\in\cap_{i=1}^{m}C_{i}$.
\end{enumerate}
\end{theorem}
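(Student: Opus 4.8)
The plan is to reduce both assertions to the abstract convergence theorems already recorded, after first establishing the basic structural properties of the operators $S_{n}$. I would begin by observing that each $S_{n}$ defined by \eqref{eq:-3} is firmly nonexpansive: by Example \ref{thm:2.2.21} every metric projection is firmly nonexpansive, so by Theorem \ref{FNE-equiv.cond} each reflection $\mathcal{R}_{C_{i}}=2P_{C_{i}}-Id$ (a $2$-relaxation of $P_{C_{i}}$) is nonexpansive; hence the composite reflection is nonexpansive by Remark \ref{ComposNE}, and therefore $S_{n}=2^{-1}(Id+\mathcal{V}_{\{C_{m,r}(n)(j)\}_{j=1}^{r}})$ is firmly nonexpansive by the third condition of Theorem \ref{FNE-equiv.cond}. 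Consequently, by Corollary \ref{2.2.17} and Corollary \ref{averaged is strongly non expansive}, each $S_{n}$ is averaged, strongly nonexpansive and, in particular, nonexpansive. I would also record the elementary inclusion $\cap_{i=1}^{m}C_{i}\subseteq\mathrm{Fix}(S_{n})$ for every $n$: a point lying in all the $C_{i}$ is fixed by every projection, hence by every reflection, hence by the composite reflection, and therefore by $S_{n}$.

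For assertion (i), I would invoke the generalized Opial theorem (Theorem \ref{GenOpial}) with $C=\mathcal{H}$, the given weakly regular operator $S$, and the sequence $\{S_{n}\}_{n=0}^{\infty}$ playing the role of the quasi-nonexpansive operators. The hypothesis $\mathrm{Fix}(S)\subseteq\cap_{i=1}^{m}C_{i}$ together with the inclusion from the previous paragraph yields $\mathrm{Fix}(S)\subseteq\cap_{n=0}^{\infty}\mathrm{Fix}(S_{n})$; since $S$ has a fixed point, each $\mathrm{Fix}(S_{n})$ is nonempty, so each nonexpansive $S_{n}$ is quasi-nonexpansive by Remark \ref{NE is QNE}. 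As $\{x_{n}\}$ is generated by \eqref{eq:} and satisfies $\|Sx_{n}-x_{n}\|\to0$ by assumption, all hypotheses of Theorem \ref{GenOpial} are met, and it gives $x_{n}\rightharpoonup x_{*}\in\mathrm{Fix}(S)\subseteq\cap_{i=1}^{m}C_{i}$, as required.

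For assertion (ii), the core is to identify $\mathrm{Fix}(Q)$ and then apply the classical Opial theorem (Theorem \ref{Opial}) to the self-map $Q$. Since $\cap_{i=1}^{m}C_{i}$ has an interior point and is contained in each partial intersection $\cap_{j=1}^{r}C_{m,r}(n)(j)$, the latter also has nonempty interior, so Lemma \ref{Fix r-sets operator} gives $\mathrm{Fix}(S_{n})=\cap_{j=1}^{r}C_{m,r}(n)(j)=\cap_{j=1}^{r}C_{f((r-1)n+j-1)}$. Intersecting over $n=0,\dots,j_{f}$, the relevant index set is $\{(r-1)n+j-1:0\le n\le j_{f},\ 1\le j\le r\}$; the key combinatorial point is that, because $r>1$, consecutive blocks overlap at their endpoints, so this set is exactly the contiguous range $\{0,1,\dots,(r-1)(j_{f}+1)\}$, which contains $\{1,\dots,j_{f}\}$. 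By the defining property \eqref{eq:-8} of $j_{f}$, applying $f$ to this range therefore yields all of $\{1,\dots,m\}$, whence $\cap_{n=0}^{j_{f}}\mathrm{Fix}(S_{n})=\cap_{i=1}^{m}C_{i}$. Now $Q=S_{j_{f}}\cdots S_{0}$ is a composition of averaged operators with nonempty common fixed-point set, so by Corollary \ref{compos.averaged} it is itself averaged and $\mathrm{Fix}(Q)=\cap_{i=1}^{m}C_{i}$. Being averaged, $Q$ is strongly nonexpansive by Corollary \ref{averaged is strongly non expansive}, hence asymptotically regular by Lemma \ref{Strongly nonexpansive  is asymptotically regular}, and nonexpansive; Theorem \ref{Opial} applied to $\{y_{n}\}$ from \eqref{eq:-1} then yields $y_{n}\rightharpoonup y_{*}\in\mathrm{Fix}(Q)=\cap_{i=1}^{m}C_{i}$.

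I expect the main obstacle to be the bookkeeping in assertion (ii): correctly pinning down which sets appear in $\cap_{n=0}^{j_{f}}\mathrm{Fix}(S_{n})$ through the index map $(n,j)\mapsto(r-1)n+j-1$ and confirming, via \eqref{eq:-8}, that the family $\{C_{i}\}_{i=1}^{m}$ is exhausted, together with checking the interior-point hypothesis needed to apply Lemma \ref{Fix r-sets operator} to each block. The nonexpansiveness/averagedness chain and the two Opial-type reductions are otherwise routine applications of the cited results.
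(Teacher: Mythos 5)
Your proof is correct and follows essentially the same route as the paper: part \textit{(i)} via Theorem \ref{GenOpial} with the $S_{n}$ as quasi-nonexpansive operators, and part \textit{(ii)} via Corollary \ref{compos.averaged}, Lemma \ref{Strongly nonexpansive  is asymptotically regular} and Theorem \ref{Opial} once $\mathrm{Fix}(Q)=\cap_{i=1}^{m}C_{i}$ is established. The only difference is that you inline the content of the paper's auxiliary Lemmas \ref{lem1} and \ref{lem2} (your contiguous-index-range argument for $\cap_{n=0}^{j_{f}}\mathrm{Fix}(S_{n})=\cap_{i=1}^{m}C_{i}$ is a correct, equivalent variant of the paper's bookkeeping in Lemma \ref{lem2}).
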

Note that part\textit{ (ii)} of Theorem \ref{thm1} above generalizes
Theorem 3.7 in \cite{key-19} (see also Remark \ref{rem:} (c) below).
Since it is often difficult to verify the existence of an operator $S$
as in part \textit{(i)}, we introduce a so-called \textit{$M$-quasi-periodic
\color{black}unrestricted \color{black} DR} algorithm in our next result, where we make a modest assumption
concerning the frequency of the sets defined by \eqref{eq:-5}. This
algorithm converges weakly if $\cap_{i=1}^{n}C_{i}\not=\emptyset$
and if, in addition, $\cap_{i=1}^{n}C_{i}$ has an interior point,
then its weak limit is a solution of the CFP defined by the family
$\left\{ C_{i}\right\} _{i=1}^{m}$.
\begin{theorem}
\label{thm2}Assume that $\left\{ C_{i}\right\} _{i=1}^{m}$ is a
finite family of nonempty, closed and convex subsets of $\mathcal{H}$,
$r>1$ and $M$ are positive integers. Let $f$\textup{ }be a mapping
from $\mathbb{N}$ onto $\left\{ 1,\dots,m\right\} $. For each natural
number $n$, let $\left\{ C_{m,r}\left(n\right)\left(j\right)\right\} _{j=1}^{r}$
be the finite family of subsets of $\mathcal{H}$ defined by \eqref{eq:-5}
so that the sequence of families $\left\{ \left\{ C_{m,r}\left(n\right)\left(j\right)\right\} _{j=1}^{r}\right\} _{n=0}^{\infty}$
is $M$-quasi-periodic (as a mapping defined on $\mathbb{N}$). Suppose
that the sequences $\left\{ S_{n}\right\} _{n=0}^{\infty}$ and $\left\{ x_{n}\right\} _{n=0}^{\infty}$
are defined, respectively, by \eqref{eq:-3} and \eqref{eq:}.
Then the following assertions hold:
\begin{enumerate}
\item If $\cap_{i=1}^{n}C_{i}\not=\emptyset$, then $\left\{ x_{n}\right\} _{n=0}^{\infty}$
converges weakly to a common fixed point of the operators $\left\{ S_{n}\right\} _{n=0}^{\infty}$.
\item If $\mathrm{int}\cap_{i=1}^{n}C_{i}\not=\emptyset$, then $\left\{ x_{n}\right\} _{n=0}^{\infty}$
converges weakly to a point $x_{*}\in\cap_{i=1}^{m}C_{i}$.
\end{enumerate}
\end{theorem}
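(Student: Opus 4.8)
The plan is to recognize the sequence $\left\{x_{n}\right\}_{n=0}^{\infty}$ defined by \eqref{eq:} as the unrestricted product of a \emph{finite} family of strongly nonexpansive operators and then to invoke Theorem \ref{thm6 in DR}. First I would verify that every $S_{n}$ is strongly nonexpansive. By Example \ref{thm:2.2.21} each metric projection $P_{C_{m,r}(n)(j)}$ is firmly nonexpansive, so the equivalence (i)$\Leftrightarrow$(ii) in Theorem \ref{FNE-equiv.cond} shows that its reflection $\mathcal{R}_{C_{m,r}(n)(j)}=2P_{C_{m,r}(n)(j)}-Id$, being the $2$-relaxation of $P_{C_{m,r}(n)(j)}$, is nonexpansive. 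By Remark \ref{ComposNE} the composite reflection $\mathcal{V}_{\left\{C_{m,r}(n)(j)\right\}_{j=1}^{r}}$ from \eqref{eq:-6} is then nonexpansive, so that $S_{n}=2^{-1}\bigl(Id+\mathcal{V}_{\left\{C_{m,r}(n)(j)\right\}_{j=1}^{r}}\bigr)$ from \eqref{eq:-7}, being the $\tfrac{1}{2}$-relaxation of a nonexpansive operator, is $\tfrac{1}{2}$-averaged and hence strongly nonexpansive by Corollary \ref{averaged is strongly non expansive}.

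Next I would observe that each family $\left\{C_{m,r}(n)(j)\right\}_{j=1}^{r}$ is an $r$-tuple of sets drawn from $\left\{C_{1},\dots,C_{m}\right\}$, so at most $m^{r}$ distinct families, and hence at most $m^{r}$ distinct operators $S_{n}$, can occur. Thus $\left\{S_{n}\right\}_{n=0}^{\infty}$ is precisely the unrestricted product of the finite family of these distinct operators, determined by the mapping $h$ sending $n$ to the index of the family $\left\{C_{m,r}(n)(j)\right\}_{j=1}^{r}$ among the distinct ones; after restricting the index set to the range of $h$, the $M$-quasi-periodicity hypothesis on the sequence of families is exactly the assertion that $h$ is an $M$-quasi-periodic mapping \emph{onto} this index set.

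For part (i), I would check the common fixed point set is nonempty: if $x\in\cap_{i=1}^{m}C_{i}$, then $x$ is fixed by every $P_{C_{m,r}(n)(j)}$, hence by every reflection, hence by $\mathcal{V}_{\left\{C_{m,r}(n)(j)\right\}_{j=1}^{r}}$ and by $S_{n}$, so $\cap_{i=1}^{m}C_{i}\subseteq\cap_{n}\mathrm{Fix}(S_{n})$. Theorem \ref{thm6 in DR} now applies to the finite family of distinct $S_{n}$'s with the $M$-quasi-periodic onto mapping $h$; since $x_{n}=S_{n-1}\cdots S_{0}(x_{0})$ is this unrestricted product applied to $x_{0}$, it converges weakly to a common fixed point of $\left\{S_{n}\right\}_{n=0}^{\infty}$, proving (i). For part (ii) the interior-point hypothesis identifies the fixed point sets exactly: since $\cap_{i=1}^{m}C_{i}\subseteq\cap_{j=1}^{r}C_{m,r}(n)(j)$ and $\mathrm{int}\cap_{i=1}^{m}C_{i}\neq\emptyset$, each $\cap_{j=1}^{r}C_{m,r}(n)(j)$ has nonempty interior, so Lemma \ref{Fix r-sets operator} gives $\mathrm{Fix}(S_{n})=\cap_{j=1}^{r}C_{m,r}(n)(j)$. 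Because $f$ is onto $\left\{1,\dots,m\right\}$ while the indices $(r-1)n+j-1$ (with $n\in\mathbb{N}$, $1\le j\le r$) exhaust $\mathbb{N}$, every $C_{i}$ occurs among the $C_{m,r}(n)(j)$, whence $\cap_{n}\mathrm{Fix}(S_{n})=\cap_{i=1}^{m}C_{i}$; combined with (i), the weak limit of $\left\{x_{n}\right\}_{n=0}^{\infty}$ lies in $\cap_{i=1}^{m}C_{i}$.

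The main obstacle I anticipate is the reduction in the second step, namely making precise that the infinite sequence $\left\{S_{n}\right\}_{n=0}^{\infty}$ really is an unrestricted product of a finite family and that the quasi-periodicity and surjectivity of the inducing map transfer correctly, so that the hypotheses of Theorem \ref{thm6 in DR} are met verbatim. The covering claim in part (ii) (that $(r-1)n+j-1$ runs through all of $\mathbb{N}$, so surjectivity of $f$ forces every $C_{i}$ to appear) is elementary, but it is the crux of equating $\cap_{n}\mathrm{Fix}(S_{n})$ with $\cap_{i=1}^{m}C_{i}$.
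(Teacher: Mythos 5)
Your proposal is correct and follows essentially the same route as the paper: identify the iteration as the unrestricted product of the finitely many distinct operators $S_{n}$ (the paper takes these from $\{S_{0},\dots,S_{M-1}\}$ using quasi-periodicity, you bound them by $m^{r}$), check that the induced indexing map is $M$-quasi-periodic and onto, and invoke Theorem \ref{thm6 in DR}, with part \textit{(ii)} following from the fixed-point identification under the interior-point hypothesis. The only difference is that you re-derive inline what the paper delegates to Lemma \ref{lem1} and Lemma \ref{lem2}.
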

\begin{theorem}
\label{thm3}Assume that $\left\{ C_{i}\right\} _{i=1}^{m}$ is a
finite family of nonempty, closed and convex subsets of $\mathcal{H}$
such that $\cap_{i=1}^{m}C_{i}$ has an interior point,
and that $r>1$ and $M$ are positive integers.
Suppose further that $f$\textup{ }is a mapping
from $\mathbb{N}$ onto $\left\{ 1,\dots,m\right\}$, the sequence
$\left\{ S_{n}\right\} _{n=0}^{\infty}$ is defined
 by \eqref{eq:-3}, $j_{f}$
is defined by \eqref{eq:-8} and $Q$ is defined by \eqref{eq:-4}.
Let $\left\{ T_{j}\right\} _{j=1}^{k}$ be a family of strongly nonexpansive
operators \color{black}with a common fixed point, \color{black}let $h$ be an $M$-quasi-periodic mapping onto $\left\{ 1,\dots,k\right\} $,
let $x\in\mathcal{H}$ and let $\left\{ P_{n}\right\} _{n=0}^{\infty}$
be the \color{black}unrestricted \color{black} product of the family $\left\{ T_{j}\right\} _{j=0}^{k}$
determined by $h$. Then the following assertions hold:
\begin{enumerate}
\item If for each $n\in\left\{ 1,\dots,j_{f}\right\} $, the operator $S_{n}$
is an element of $\left\{ T_{j}\right\} _{j=1}^{k}$, then $\left\{ P_{n}x\right\} _{n=0}^{\infty}$
converges weakly to a point $x_{*}\in\cap_{i=1}^{m}C_{i}$.
\item If $Q$ is an element of $\left\{ T_{j}\right\} _{j=1}^{k}$, then
$\left\{ P_{n}x\right\} _{n=0}^{\infty}$ converges weakly to a point
$x_{*}\in\cap_{i=1}^{m}C_{i}$.
\end{enumerate}
\end{theorem}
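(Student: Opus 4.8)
The plan is to reduce both parts to a single application of Theorem~\ref{thm6 in DR}, the only real work being the identification of the fixed point sets of the operators $S_n$ and of $Q$ with intersections of the sets $C_i$. First I would record that each $S_n$ is strongly nonexpansive with an explicitly computable fixed point set. Indeed, every reflection $\mathcal{R}_{C_{m,r}(n)(j)}=2P_{C_{m,r}(n)(j)}-Id$ is the $2$-relaxation of a metric projection, hence nonexpansive by Example~\ref{thm:2.2.21} and Theorem~\ref{FNE-equiv.cond}; by Remark~\ref{ComposNE} their composition $\mathcal{V}_{\{C_{m,r}(n)(j)\}_{j=1}^{r}}$ is nonexpansive, so $S_n=2^{-1}(Id+\mathcal{V}_{\{C_{m,r}(n)(j)\}_{j=1}^{r}})$ is firmly nonexpansive by Theorem~\ref{FNE-equiv.cond}(iii), and therefore $(1/2)$-averaged and strongly nonexpansive by Corollaries~\ref{2.2.17} and~\ref{averaged is strongly non expansive}. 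Since $\cap_{j=1}^{r}C_{m,r}(n)(j)\supseteq\cap_{i=1}^{m}C_{i}$ and the latter has an interior point, the former does too, so Lemma~\ref{Fix r-sets operator} yields $\mathrm{Fix}(S_n)=\cap_{j=1}^{r}C_{m,r}(n)(j)$.

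The combinatorial heart of the proof is to show that the operators $S_0,\dots,S_{j_f}$ appearing in \eqref{eq:-4} together involve every set $C_i$. By \eqref{eq:-5}, as $n$ runs over $\{0,\dots,j_f\}$ and $j$ over $\{1,\dots,r\}$ the index $(r-1)n+j-1$ runs through all of $\{0,1,\dots,(r-1)(j_f+1)\}$, because $r>1$ forces consecutive blocks to overlap; in particular this index set contains $\{1,\dots,j_f\}$. Hence, by \eqref{eq:-8}, $\{C_{m,r}(n)(j):0\le n\le j_f,\ 1\le j\le r\}=\{C_1,\dots,C_m\}$, and consequently $\cap_{n=0}^{j_f}\mathrm{Fix}(S_n)=\cap_{i=1}^{m}C_i$. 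Because $\cap_{i=1}^{m}C_i$ has an interior point it is in particular nonempty, so Corollary~\ref{compos.averaged}, applied to the composition $Q=S_{j_f}\cdots S_0$ of the averaged operators $S_n$, yields both that $Q$ is averaged (hence strongly nonexpansive) and that $\mathrm{Fix}(Q)=\cap_{n=0}^{j_f}\mathrm{Fix}(S_n)=\cap_{i=1}^{m}C_i$.

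With these identifications in hand, the conclusion follows from Theorem~\ref{thm6 in DR}. The family $\{T_j\}_{j=1}^{k}$ consists of strongly nonexpansive operators with a common fixed point and $h$ is $M$-quasi-periodic onto $\{1,\dots,k\}$, so that theorem guarantees $P_n x\rightharpoonup x_{*}$ for some $x_{*}\in\cap_{j=1}^{k}\mathrm{Fix}(T_j)$. For part~(i), the assumption that each such $S_n$ belongs to $\{T_j\}_{j=1}^{k}$ gives $\cap_{j=1}^{k}\mathrm{Fix}(T_j)\subseteq\cap_{n=0}^{j_f}\mathrm{Fix}(S_n)=\cap_{i=1}^{m}C_i$, whence $x_{*}\in\cap_{i=1}^{m}C_i$; for part~(ii), the assumption that $Q\in\{T_j\}_{j=1}^{k}$ gives $\cap_{j=1}^{k}\mathrm{Fix}(T_j)\subseteq\mathrm{Fix}(Q)=\cap_{i=1}^{m}C_i$, so again $x_{*}\in\cap_{i=1}^{m}C_i$. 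I expect the main obstacle to be precisely the combinatorial covering claim of the second paragraph---getting the index bookkeeping in \eqref{eq:-5} and \eqref{eq:-8} exactly right, in particular pinning down which operators $S_n$ must be included so that their fixed point sets intersect down to $\cap_{i=1}^{m}C_i$---while every remaining step is a direct appeal to the results recalled in Section~\ref{sec1} and to Theorem~\ref{thm6 in DR}.
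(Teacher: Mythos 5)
Your proposal is correct and takes essentially the same route as the paper: a single application of Theorem~\ref{thm6 in DR} giving $P_{n}x\rightharpoonup x_{*}\in\cap_{j=1}^{k}\mathrm{Fix}(T_{j})$, followed by the identifications $\mathrm{Fix}(Q)=\cap_{n}\mathrm{Fix}(S_{n})=\cap_{i=1}^{m}C_{i}$, the only difference being that you re-derive inline (including the combinatorial covering of the indices $(r-1)n+j-1$) what the paper obtains by citing Lemma~\ref{lem1} and Lemma~\ref{lem2}. One small index point, shared with the paper's own formulation: part \emph{(i)} only guarantees $S_{n}\in\left\{ T_{j}\right\} _{j=1}^{k}$ for $n\in\left\{ 1,\dots,j_{f}\right\} $, so your inclusion $\cap_{j=1}^{k}\mathrm{Fix}(T_{j})\subseteq\cap_{n=0}^{j_{f}}\mathrm{Fix}(S_{n})$ should start at $n=1$ and the covering argument be checked over that range; this does not affect the substance of the argument.
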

\global\long\def\theenumi{\color{black}\alph{enumi}}%
\begin{remark}\label{rem:}~
\begin{enumerate}
\item If $\mathcal{H}$ is a finite dimensional space, then the convergence in our results is strong.
\item In particular, due to Lemma \ref{Nonexpansive is weakly regular},
Theorem \ref{thm1}\textit{(i)} can be applied to the case where the
operator $S$ is a nonexpansive operator.
\item The cyclic DR algorithm, first introduced in \cite{key-19}, is obtained
by choosing in Theorem \ref{thm2} $f:\mathbb{N}\rightarrow\left\{ 1,\dots,m\right\}$
to be defined by $f\left(n\right):=n\,\,\mathrm{mod\,}\,m+1$ for
each $n\in\mathbb{N}$ and $M=m$. Here we obtain, in addition, that
it always converges to a solution of the CFP defined by the family
$\left\{ C_{i}\right\} _{i=1}^{m}$ under the assumption that $\cap_{i=1}^{n}C_{i}$
has an interior point.
\item By Theorem \ref{2.3.4}, Corollary \ref{averaged is strongly non expansive}
and Example \ref{thm:2.2.21}, Theorem \ref{thm3} can be applied
to the family of strongly nonexpansive operators $\left\{ T_{i}\right\} _{i=1}^{k}$,
which contains, in particular, $\lambda$-relaxed firmly nonexpansive
operators with the relaxation parameter lying in the interval $\left(0,2\right)$,
averaged operators and metric projections.
\end{enumerate}
\end{remark}

\section{\label{sec4}Auxiliary results}
In this section, we present several technical lemmata, which will
be used in the proofs of our main results.
\begin{lemma}
\label{lem1}For each family $\left\{ C_{i}\right\} _{i=1}^{r}$ of
nonempty, closed and convex subsets of $\mathcal{H}$, the \color{black}unrestricted \color{black}
$r$-sets-DR operator $\mathcal{T}_{\left\{ C_{i}\right\} _{i=1}^{r}}$
is averaged and hence strongly nonexpansive. In particular, if the
family $\left\{ C_{m,r}\left(n\right)\left(j\right)\right\} _{j=1}^{r}$
is defined by \eqref{eq:-5}, where $\left\{ C_{i}\right\} _{i=1}^{m}$
is a given family of nonempty, closed and convex subsets of $\mathcal{H}$,
$r>1$ is an integer and $f$ is a mapping from $N$ onto $\left\{ 1,\dots,m\right\} $,
then for each $n\in\mathbb{N}$, the operator $S_{n}$ defined by
\eqref{eq:-3} is averaged and hence strongly nonexpansive. As a result,
the composite \color{black}unrestricted \color{black} DR operator $Q$ defined by \eqref{eq:-4} is
averaged and hence strongly nonexpansive.
\end{lemma}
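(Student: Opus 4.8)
The plan is to build the result up from the elementary operators outward, using only the relaxation/averaging toolkit recorded above. First I would observe that each reflection is itself a relaxation of a projection: from the definition $\mathcal{R}_{C_i}=2P_{C_i}-Id$ and the definition $T_{\lambda}:=(1-\lambda)Id+\lambda T$ one reads off that $\mathcal{R}_{C_i}=(P_{C_i})_{2}$, the $2$-relaxation of $P_{C_i}$. Since $P_{C_i}$ is firmly nonexpansive by Example \ref{thm:2.2.21}, the equivalence of conditions (i) and (ii) in Theorem \ref{FNE-equiv.cond} tells us that $(P_{C_i})_{\lambda}$ is nonexpansive for every $\lambda\in[0,2]$; taking $\lambda=2$ shows that each $\mathcal{R}_{C_i}$ is nonexpansive.

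Next I would invoke Remark \ref{ComposNE}: as $\mathcal{V}_{\left\{C_{i}\right\}_{i=1}^{r}}=\mathcal{R}_{C_{r}}\cdots\mathcal{R}_{C_{1}}$ is a composition of nonexpansive operators, it is nonexpansive. The key step is then to recognize the $r$-set DR operator as a relaxation of $\mathcal{V}_{\left\{C_{i}\right\}_{i=1}^{r}}$: rewriting \eqref{eq:-7} gives
\[
\mathcal{T}_{\left\{C_{i}\right\}_{i=1}^{r}}=2^{-1}\bigl(Id+\mathcal{V}_{\left\{C_{i}\right\}_{i=1}^{r}}\bigr)=\bigl(1-2^{-1}\bigr)Id+2^{-1}\mathcal{V}_{\left\{C_{i}\right\}_{i=1}^{r}}=\bigl(\mathcal{V}_{\left\{C_{i}\right\}_{i=1}^{r}}\bigr)_{2^{-1}},
\]
i.e. the $2^{-1}$-relaxation of the nonexpansive operator $\mathcal{V}_{\left\{C_{i}\right\}_{i=1}^{r}}$. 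Because $2^{-1}\in(0,1)$, this exhibits $\mathcal{T}_{\left\{C_{i}\right\}_{i=1}^{r}}$ as a $2^{-1}$-relaxed nonexpansive operator, hence $2^{-1}$-averaged by the definition of averagedness; Corollary \ref{averaged is strongly non expansive} then yields strong nonexpansiveness. This settles the first assertion.

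The remaining two assertions follow immediately. By \eqref{eq:-3}, each $S_{n}$ equals $\mathcal{T}_{\left\{C_{m,r}(n)(j)\right\}_{j=1}^{r}}$, which is an $r$-set DR operator of exactly the form just treated, so $S_{n}$ is averaged and strongly nonexpansive. Finally, $Q=S_{j_{f}}\cdots S_{0}$ by \eqref{eq:-4} is a finite composition of the averaged operators $S_{0},\dots,S_{j_{f}}$, so Corollary \ref{compos.averaged} gives that $Q$ is averaged, and a further application of Corollary \ref{averaged is strongly non expansive} gives that it is strongly nonexpansive.

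I do not expect a genuine obstacle here, since the proof is an assembly of the quoted facts; the only point requiring care is the identification of $\mathcal{T}_{\left\{C_{i}\right\}_{i=1}^{r}}$ as the $2^{-1}$-relaxation of the nonexpansive $\mathcal{V}_{\left\{C_{i}\right\}_{i=1}^{r}}$. One could alternatively appeal to the equivalence (i)$\,\Leftrightarrow\,$(iii) of Theorem \ref{FNE-equiv.cond} to declare $\mathcal{T}_{\left\{C_{i}\right\}_{i=1}^{r}}$ firmly nonexpansive and then apply Corollary \ref{2.2.17} with $\lambda=1$; both routes succeed, but the direct relaxation description has the advantage of keeping the averaging constant $2^{-1}$ explicit and avoiding any reliance on the enumerated labels of Theorem \ref{FNE-equiv.cond}.
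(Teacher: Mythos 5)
Your proof is correct and follows essentially the same route as the paper: reflections are nonexpansive via Example \ref{thm:2.2.21} and Theorem \ref{FNE-equiv.cond}, the composite reflection is nonexpansive by Remark \ref{ComposNE}, and the composition claims for $S_n$ and $Q$ are handled by Corollaries \ref{compos.averaged} and \ref{averaged is strongly non expansive}. The only (harmless) divergence is at the middle step, where you read off averagedness of $\mathcal{T}_{\left\{C_{i}\right\}_{i=1}^{r}}$ directly from the definition as the $2^{-1}$-relaxation of the nonexpansive $\mathcal{V}_{\left\{C_{i}\right\}_{i=1}^{r}}$, whereas the paper first deduces firm nonexpansiveness via Theorem \ref{FNE-equiv.cond}\textit{(iii)}$\Rightarrow$\textit{(i)} and then applies Corollary \ref{2.2.17} --- precisely the alternative you mention at the end.
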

\begin{proof}
Let $\left\{ C_{i}\right\} _{i=1}^{r}$ be a family of nonempty, closed
and convex subsets of $\mathcal{H}$. For each $i=1,\dots,r$, the
reflection $\mathcal{R}_{C_{i}}=2P_{C_{i}}-Id$ is nonexpansive by
Example \ref{thm:2.2.21} and Theorem \ref{FNE-equiv.cond}\textit{
(i)} and \textit{(ii)}. Hence the composite reflection operator $\mathcal{V}_{\left\{ C_{i}\right\} _{i=1}^{r}}$
is nonexpansive by Remark \ref{ComposNE}. By \eqref{eq:-7} and Theorem
\ref{FNE-equiv.cond} \textit{(iii)} and \textit{(i)}, $\mathcal{T}_{\left\{ C_{i}\right\} _{i=1}^{r}}$
is firmly nonexpansive and by Corollary \ref{2.2.17}, it is averaged
and hence strongly nonexpansive by Corollary \ref{averaged is strongly non expansive}.
The rest of the statement of the lemma follows from \eqref{eq:-3},
\eqref{eq:-4}, Corollary \ref{compos.averaged}, and Corollary \ref{averaged is strongly non expansive}.
\end{proof}
\begin{lemma}
\label{lem2}Assume that $\left\{ C_{i}\right\} _{i=1}^{m}$ is a
family of nonempty, closed and convex subsets of $\mathcal{H}$, $r>1$
is an integer and $f$ is a mapping from $N$ onto $\left\{ 1,\dots,m\right\} $.
Then
\[
\cap_{i=1}^{m}C_{i}\subset\cap_{n=0}^{\infty}\mathrm{Fix}(S_{n})\subset\cap_{n=0}^{j_{f}}\mathrm{Fix}(S_{n}),
\]
where $\left\{ S_{n}\right\} _{n=0}^{\infty}$ and $j_{f}$ are
defined by, respectively, \eqref{eq:-3} and \eqref{eq:-8}. If, in
addition, $\mathrm{int}\cap_{i=1}^{m}C_{i}\not=\emptyset$, then
\[
\cap_{i=1}^{m}C_{i}=\cap_{n=0}^{\infty}\mathrm{Fix}(S_{n})=\cap_{n=0}^{j_{f}}\mathrm{Fix}(S_{n}).
\]
\end{lemma}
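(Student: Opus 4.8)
The plan is to establish the unconditional chain $\cap_{i=1}^{m}C_{i}\subseteq\cap_{n=0}^{\infty}\mathrm{Fix}(S_{n})\subseteq\cap_{n=0}^{j_{f}}\mathrm{Fix}(S_{n})$ first, and then, under the interior hypothesis, to close the loop by proving the reverse inclusion $\cap_{n=0}^{j_{f}}\mathrm{Fix}(S_{n})\subseteq\cap_{i=1}^{m}C_{i}$, which collapses the chain into the two asserted equalities.

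For the first inclusion I would fix $x\in\cap_{i=1}^{m}C_{i}$ and an arbitrary $n\in\mathbb{N}$. By \eqref{eq:-5} each set $C_{m,r}(n)(j)$ is one of the $C_{i}$, so $x$ belongs to it; hence $P_{C_{m,r}(n)(j)}x=x$ and $\mathcal{R}_{C_{m,r}(n)(j)}x=2x-x=x$. Consequently $\mathcal{V}_{\{C_{m,r}(n)(j)\}_{j=1}^{r}}x=x$ by \eqref{eq:-6}, and therefore $S_{n}x=2^{-1}(x+x)=x$ by \eqref{eq:-7} and \eqref{eq:-3}. Thus $x\in\mathrm{Fix}(S_{n})$ for every $n$, which gives $\cap_{i=1}^{m}C_{i}\subseteq\cap_{n=0}^{\infty}\mathrm{Fix}(S_{n})$. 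The second inclusion $\cap_{n=0}^{\infty}\mathrm{Fix}(S_{n})\subseteq\cap_{n=0}^{j_{f}}\mathrm{Fix}(S_{n})$ is immediate, since $\{0,\dots,j_{f}\}\subseteq\mathbb{N}$ and intersecting over a smaller index set only enlarges the result.

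For the additional conclusion I would first note that $\cap_{i=1}^{m}C_{i}\subseteq\cap_{j=1}^{r}C_{m,r}(n)(j)$ for each $n$, so the hypothesis $\mathrm{int}\cap_{i=1}^{m}C_{i}\neq\emptyset$ forces $\mathrm{int}\cap_{j=1}^{r}C_{m,r}(n)(j)\neq\emptyset$. Lemma \ref{Fix r-sets operator} then applies to the family $\{C_{m,r}(n)(j)\}_{j=1}^{r}$ and yields $\mathrm{Fix}(S_{n})=\cap_{j=1}^{r}C_{m,r}(n)(j)=\cap_{j=1}^{r}C_{f((r-1)n+j-1)}$. Intersecting over $n\in\{0,\dots,j_{f}\}$ turns $\cap_{n=0}^{j_{f}}\mathrm{Fix}(S_{n})$ into the intersection of the sets $C_{f(k)}$ as $k$ runs over the index set $K:=\{(r-1)n+j-1:0\le n\le j_{f},\ 1\le j\le r\}$.

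The crux, and the only step that is not entirely routine, is the combinatorics of $K$: I would show $K\supseteq\{1,\dots,j_{f}\}$. Given $k$ with $1\le k\le j_{f}$, put $n:=\lfloor k/(r-1)\rfloor$ and $j:=k-(r-1)n+1$; then the remainder $k-(r-1)n$ lies in $\{0,\dots,r-2\}$, so $1\le j\le r$, while $n\le k\le j_{f}$ because $r-1\ge1$, and $(r-1)n+j-1=k$, so $k\in K$. Since $f(\{1,\dots,j_{f}\})=\{1,\dots,m\}$ by \eqref{eq:-8} and $f$ takes values in $\{1,\dots,m\}$, it follows that $\{f(k):k\in K\}=\{1,\dots,m\}$, whence $\cap_{n=0}^{j_{f}}\mathrm{Fix}(S_{n})=\cap_{k\in K}C_{f(k)}=\cap_{i=1}^{m}C_{i}$. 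Combining this with the two inclusions above yields $\cap_{i=1}^{m}C_{i}=\cap_{n=0}^{\infty}\mathrm{Fix}(S_{n})=\cap_{n=0}^{j_{f}}\mathrm{Fix}(S_{n})$, as claimed.
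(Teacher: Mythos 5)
Your proposal is correct and follows essentially the same route as the paper: the forward inclusions come from the fact that points of $\cap_{i=1}^{m}C_{i}$ are fixed by every reflection and hence by every $S_{n}$, and the reverse inclusion under the interior hypothesis comes from applying Lemma \ref{Fix r-sets operator} to each $S_{n}$ together with a division-with-remainder argument modulo $r-1$ showing that every index in $\{1,\dots,j_{f}\}$ (and hence every $C_{p}$) is covered by some block with $N\le j_{f}$. The only cosmetic difference is that you organize the combinatorics as a statement about the index set $K$ while the paper chases a single arbitrary $p\in\{1,\dots,m\}$.
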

\begin{proof}
Clearly, by \eqref{eq:-3}, Remark \ref{Relaxation has the same Fix}
and \eqref{eq:-6}, for each $n\in\mathbb{N},$
\[
\mathrm{Fix}(S_{n})=\mathrm{Fix}\left(\mathcal{T}_{\left\{ C_{m,r}\left(n\right)\left(j\right)\right\} _{j=1}^{r}}\right)=\mathrm{Fix}\left(\mathcal{V}_{\left\{ C_{m,r}\left(n\right)\left(j\right)\right\} _{j=1}^{r}}\right)\supset\cap_{i=1}^{m}C_{i}.
\]
Hence
\[
\cap_{i=1}^{m}C_{i}\subset\cap_{n=0}^{\infty}\mathrm{Fix}(S_{n})\subset\cap_{n=0}^{j_{f}}\mathrm{Fix}(S_{n}).
\]

Assume that $\mathrm{int}\cap_{i=1}^{m}C_{i}\not=\emptyset$. Let
$z\in\cap_{n=0}^{j_{f}}\mathrm{Fix}(S_{n})$ and $p\in\left\{ 1,\dots,m\right\} $
be arbitrary. There exist $j\in\mathbb{N}$ and unique $N,k\in\mathbb{N}$
such that $f\left(j\right)=p$, $j\le j_{f}$, $k<r-1$ and $j=\left(r-1\right)N+k$.
Since $N\le j\le j_{f}$ and $k+1\le r-1$, by \eqref{eq:-3}, Lemma
\ref{Fix r-sets operator} and \eqref{eq:-5}, we have
\[
z\in\mathrm{Fix}(S_{N})=\mathrm{Fix}\left(\mathcal{T}_{\left\{ C_{m,r}\left(N\right)\left(j\right)\right\} _{j=1}^{r}}\right)=\cap_{j=1}^{r}C_{f\left(\left(r-1\right)N+j-1\right)}\subset C_{f\left(\left(r-1\right)N+k\right)}=C_{p}.
\]
Hence
\[
\cap_{n=0}^{\infty}\mathrm{Fix}(S_{n})\subset\cap_{n=0}^{j_{f}}\mathrm{Fix}(S_{n})\subset\cap_{i=1}^{m}C_{i}
\]
and the result follows.
\end{proof}

The rest of this section is devoted to the proof of Theorem \ref{thm6 in DR}.
\begin{lemma}
\label{Lem 3}Assume that $\left\{ T_{i}\right\} _{i=1}^{m}$ is a
finite family of operators, $M$ is a positive integer and $h$ is
an $M$-quasi-periodic mapping onto $\left\{ 1,\dots,m\right\} $.
Then for each strictly increasing sequence $\left\{ n_{k}\right\} _{k=0}^{\infty}$
of natural numbers such that $n_{k}\ge n_{k-1}+M$ for each positive
integer $k$, there exist a strictly increasing sequence $\left\{ n_{k}^{\prime}\right\} _{k=0}^{\infty}$
of natural numbers and a finite sequence $\left\{ l_{i}\right\} _{i=1}^{M}$
so that the set of values of $\left\{ l_{i}\right\} _{i=1}^{M}$ is
$\left\{ 1,\dots,m\right\} $ and $h\left(j\right)=l_{j-n_{n_{k}^{\prime}}}$
for each $k\in\mathbb{N}$ and each $j\in\left\{ n_{n_{k}^{\prime}}+1,\dots,n_{n_{k}^{\prime}}+M\right\} $.
As a result,
\[
P_{n_{n_{k+1}^{\prime}}}=T_{h\left(n_{n_{k+1}^{\prime}}\right)}\cdots T_{h\left(n_{n_{k}^{\prime}}+M+1\right)}T_{l_{M}}\cdots T_{l_{1}}P_{n_{n_{k}^{\prime}}},
\]
where $\left\{ P_{n}\right\} _{n=0}^{\infty}$ is the \color{black}unrestricted \color{black} product
of $\left\{ T_{i}\right\} _{i=1}^{m}$ determined by $h$.
\end{lemma}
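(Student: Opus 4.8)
The plan is to treat this as a purely combinatorial statement about the index mapping $h$: the operators $T_i$ enter only at the very end, and the final identity will follow from the definition $P_n = T_{h(n)}\cdots T_{h(0)}$ together with the empty-product convention recalled earlier in the excerpt. First I would record, for each index $k$, the block pattern
$$\pi_k := \big(h(n_k+1),\, h(n_k+2),\, \dots,\, h(n_k+M)\big) \in \{1,\dots,m\}^M.$$
Applying the $M$-quasi-periodicity of $h$ at the point $n_k+1$ gives $h(\{n_k+1,\dots,n_k+M\}) = \{1,\dots,m\}$, so the set of entries of \emph{every} pattern $\pi_k$ is exactly $\{1,\dots,m\}$.

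Next comes the conceptual heart of the argument, a pigeonhole step. There are only finitely many possible patterns, namely the at most $m^M$ elements of $\{1,\dots,m\}^M$, whereas the index set $\mathbb{N}$ is infinite; hence some fixed pattern $(l_1,\dots,l_M)$ is realised as $\pi_k$ for infinitely many $k$. I would then let $\{n_k'\}_{k=0}^\infty$ be the strictly increasing enumeration of the infinite set $\{k\in\mathbb{N} : \pi_k = (l_1,\dots,l_M)\}$. By construction $h(n_{n_k'}+i) = l_i$ for every $k\in\mathbb{N}$ and every $i\in\{1,\dots,M\}$, which is precisely the assertion $h(j) = l_{j-n_{n_k'}}$ for $j\in\{n_{n_k'}+1,\dots,n_{n_k'}+M\}$; and by the previous paragraph the value set of $\{l_i\}_{i=1}^M$ is $\{1,\dots,m\}$.

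It then remains to derive the product identity. Since $\{n_k'\}$ and $\{n_k\}$ are both strictly increasing we have $n_{n_{k+1}'} > n_{n_k'}$, so peeling off from $P_{n_{n_{k+1}'}}$ the factors with index exceeding $n_{n_k'}$ yields
$$P_{n_{n_{k+1}'}} = T_{h(n_{n_{k+1}'})} \cdots T_{h(n_{n_k'}+1)}\, P_{n_{n_k'}}.$$
The ordering hypotheses place the block $\{n_{n_k'}+1,\dots,n_{n_k'}+M\}$ at the bottom of this product: from $n_{k+1}' \ge n_k'+1$ and the standing assumption $n_l \ge n_{l-1}+M$ (used with $l = n_k'+1$) one gets $n_{n_{k+1}'} \ge n_{n_k'+1} \ge n_{n_k'}+M$. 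Splitting the product at index $n_{n_k'}+M$ and rewriting the bottom block via $h(n_{n_k'}+i)=l_i$ as $T_{l_M}\cdots T_{l_1}$ gives the claimed formula, where the leading factor $T_{h(n_{n_{k+1}'})}\cdots T_{h(n_{n_k'}+M+1)}$ is read as $Id$ in the boundary case $n_{n_{k+1}'} = n_{n_k'}+M$.

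I expect the pigeonhole extraction to be the essential idea but routine to justify; the only place requiring genuine care is the bookkeeping with the nested subscripts $n_{n_k'}$ and the correct handling of the empty-product convention in the boundary case, which is where an informal argument could slip.
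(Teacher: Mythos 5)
Your proposal is correct and follows essentially the same route as the paper: both arguments apply $M$-quasi-periodicity to each block $\{n_k+1,\dots,n_k+M\}$ and then use the pigeonhole principle on the finitely many possible block patterns in $\{1,\dots,m\}^M$ to extract the subsequence $\{n_k'\}$. Your write-up merely makes explicit the final product-splitting step (including the inequality $n_{n_{k+1}'}\ge n_{n_k'+1}\ge n_{n_k'}+M$ and the empty-product boundary case), which the paper leaves as an immediate consequence.
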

\begin{proof}
Let $\left\{ n_{k}\right\} _{k=0}^{\infty}$ be a strictly increasing
sequence of natural numbers such that $n_{k}\ge n_{k-1}+M$ for
each positive integer $k$. Let $k\in\mathbb{N}$. Since $h$ is an $M$-quasi-periodic
mapping onto $\left\{ 1,\dots,m\right\} $, we have
$\left\{ 1,\dots,m\right\} =h\left(\left\{ n_{k}+1,\dots,n_{k}+M\right\} \right)$ for each natural number $k$.

Since the number of mappings $s:\left\{ 1,\dots,M\right\} \rightarrow\left\{ 1,\dots,m\right\} $
is finite, it follows that there exists a strictly increasing sequence
$\left\{ n_{k}^{\prime}\right\} _{k=0}^{\infty}$ of natural numbers
and a finite sequence $\left\{ l_{i}\right\} _{i=1}^{M}$ with all
the asserted properties.
\end{proof}
\begin{lemma}
\label{Lem 4}Assume that $\left\{ v_{n}\right\} _{n=0}^{\infty}$
is a bounded sequence in $\mathcal{H}$, $\left\{ T_{i}\right\} _{i=1}^{m}$
is a finite family of strongly nonexpansive operators such that the
origin is their common fixed point and $v\in\mathcal{H}$ is such that
\begin{equation}
v_{n}\rightharpoonup v\,\,\mathrm{and}\,\,\left\Vert T_{i-1}\cdots T_{1}v_{n}\right\Vert -\left\Vert T_{i}\cdots T_{1}v_{n}\right\Vert \rightarrow0\label{eq:-9}
\end{equation}
for each $i=1,\dots,m$. Then the sequence $\left\{ T_{m}\cdots T_{1}v_{n}\right\} _{n=1}^{\infty}$
converges weakly to $v$, which is also a common fixed point of the
family $\left\{ T_{i}\right\} _{i=1}^{m}$.
\end{lemma}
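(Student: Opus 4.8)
The plan is to exploit the fact that each $T_i$ fixes the origin, which turns Condition $(S)$ from Definition \ref{SNE} into a statement about a single sequence. Write $P_0:=Id$ and $P_i:=T_i\cdots T_1$ for $i=1,\dots,m$, so that the hypothesis \eqref{eq:-9} reads $\left\Vert P_{i-1}v_n\right\Vert-\left\Vert P_iv_n\right\Vert\to0$ for each $i$. Since $T_i0=0$, specializing Condition $(S)$ for $T_i$ to the sequences $x_n:=P_{i-1}v_n$ and $y_n:=0$ gives the implication: if $\left\{P_{i-1}v_n\right\}_{n=0}^{\infty}$ is bounded and $\left\Vert P_{i-1}v_n\right\Vert-\left\Vert P_iv_n\right\Vert\to0$, then $P_{i-1}v_n-P_iv_n\to0$ strongly.

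First I would prove, by induction on $i\in\left\{0,1,\dots,m\right\}$, that $v_n-P_iv_n\to0$ strongly; this automatically forces $\left\{P_iv_n\right\}_{n=0}^{\infty}$ to be bounded (as $\left\{v_n\right\}_{n=0}^{\infty}$ is bounded by hypothesis) and $P_iv_n\rightharpoonup v$ (as $v_n\rightharpoonup v$). The base case $i=0$ is trivial. For the inductive step, the induction hypothesis ensures that $\left\{P_{i-1}v_n\right\}_{n=0}^{\infty}$ is bounded; together with $\left\Vert P_{i-1}v_n\right\Vert-\left\Vert P_iv_n\right\Vert\to0$ from \eqref{eq:-9} and the specialized Condition $(S)$ above, this yields $P_{i-1}v_n-P_iv_n\to0$ strongly. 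Adding this to $v_n-P_{i-1}v_n\to0$ gives $v_n-P_iv_n\to0$, completing the induction. Taking $i=m$ shows that $T_m\cdots T_1v_n=P_mv_n\rightharpoonup v$, which is the first assertion.

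It remains to show that $v$ is a common fixed point of the family, and here I would invoke Lemma \ref{Lemma 1 in DR}, which is tailored to precisely this situation. Fix $i\in\left\{1,\dots,m\right\}$ and apply it to the nonexpansive operator $T_i$ and the sequence $\left\{P_{i-1}v_n\right\}_{n=0}^{\infty}$: the induction above provides $P_{i-1}v_n\rightharpoonup v$ and $T_i\left(P_{i-1}v_n\right)=P_iv_n\rightharpoonup v$, while \eqref{eq:-9} supplies $\left\Vert P_{i-1}v_n\right\Vert-\left\Vert T_i\left(P_{i-1}v_n\right)\right\Vert\to0$. Lemma \ref{Lemma 1 in DR} then yields $v\in\mathrm{Fix}(T_i)$. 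Since $i$ was arbitrary, $v\in\cap_{i=1}^{m}\mathrm{Fix}(T_i)$, as required.

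The conceptual crux, and really the only nontrivial point, is the observation that the common fixed point being the origin collapses Condition $(S)$ onto a single sequence via $T_iy_n=T_i0=0$; the passage from $\left\Vert P_{i-1}v_n\right\Vert-\left\Vert P_iv_n\right\Vert\to0$ to the strong convergence $P_{i-1}v_n-P_iv_n\to0$ is exactly where strong nonexpansiveness (as opposed to mere nonexpansiveness) is used. Beyond that, the argument is a routine telescoping induction together with a direct citation of Lemma \ref{Lemma 1 in DR}, so I do not anticipate any further estimates being needed.
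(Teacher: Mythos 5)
Your proposal is correct and follows essentially the same route as the paper: both arguments specialize Condition $(S)$ to the pair $x_{n}:=T_{i-1}\cdots T_{1}v_{n}$, $y_{n}:=0$ (using that the origin is a common fixed point) to get $T_{i-1}\cdots T_{1}v_{n}-T_{i}\cdots T_{1}v_{n}\rightarrow0$, propagate the weak limit $v$ through the composition by induction, and invoke Lemma \ref{Lemma 1 in DR} to conclude $v\in\mathrm{Fix}(T_{i})$. The only difference is organizational -- you induct on the auxiliary claim $v_{n}-T_{i}\cdots T_{1}v_{n}\rightarrow0$ and then harvest all the fixed-point conclusions at once, whereas the paper inducts on the full statement of the lemma -- which is not a genuinely different argument.
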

\begin{proof}
The proof is by induction on $m$. For $m=0$ the statement is clear.
Assume that $m>0$. By the induction hypothesis $\left\{ T_{m-1}\cdots T_{1}v_{n}\right\} _{n=0}^{\infty}$
converges weakly to $v$, which is a common fixed point of the family
$\left\{ T_{i}\right\} _{i=1}^{m-1}$. Since each $T_{i}$ is nonexpansive,
since the origin is a common fixed point of the family $\left\{ T_{i}\right\} _{i=1}^{m}$
and since the sequence $\left\{ v_{n}\right\} _{n=0}^{\infty}$ is bounded, we
see that the sequence $\left\{ T_{m-1}\cdots T_{1}v_{n}\right\} _{n=0}^{\infty}$
is also bounded. Define a sequence $\left\{ w_{n}\right\} _{n=1}^{\infty}\subset\mathcal{H}$
by $w_{n}:=0$ for all $n\in\mathbb{N}$. Since $T_{m}$ satisfies
Condition $\left(S\right)$ and
\[
\left\Vert T_{m-1}\cdots T_{1}v_{n}-w_{n}\right\Vert -\left\Vert T_{m}T_{m-1}\cdots T_{1}v_{n}-T_{m}w_{n}\right\Vert \rightarrow0,
\]
it follows that
\[
T_{m-1}\cdots T_{1}v_{n}-T_{m}\dots T_{1}v_{n}\rightharpoonup0.
\]
As a result, $T_{m}\cdots T_{1}v_{n}$ converges weakly to $v$. Combining
this with the induction hypothesis, \eqref{eq:-9} and Lemma \ref{Lemma 1 in DR}
applied to the sequence $\left\{ T_{m-1}\cdots T_{1}v_{n}\right\} _{n=0}^{\infty}$
and the operator $T_{m}$, we obtain the desired result.
\end{proof}
\begin{lemma}
\label{lem 5}Assume that $\left\{ T_{i}\right\} _{i=1}^{m}$ is a
finite family of strongly nonexpansive operators such that the origin
is their common fixed point, $M$ is a positive integer and $h$ is
an $M$-quasi-periodic mapping onto $\left\{ 1,\dots,m\right\}$.
Assume that $x\in\mathcal{H}$ and that $v$ is a weak cluster point of the
sequence $\left\{ P_{n}x\right\} _{n=0}^{\infty}$, where $\left\{ P_{n}\right\} _{n=0}^{\infty}$
is the \color{black}unrestricted \color{black} product of $\left\{ T_{i}\right\} _{i=1}^{m}$ determined
by $h$. Then $v$ is also a common fixed point of the family $\left\{ T_{i}\right\} _{i=1}^{m}$.
\end{lemma}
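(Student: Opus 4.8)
The plan is to exploit the fact that the origin is a common fixed point to force monotonicity of the norms along the unrestricted product, and then to combine the block-factorization supplied by Lemma \ref{Lem 3} with the telescoping criterion of Lemma \ref{Lem 4}.

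First I would record the basic monotonicity. Since each $T_i$ is nonexpansive and fixes the origin, $\left\Vert T_i y\right\Vert = \left\Vert T_i y - T_i 0\right\Vert \le \left\Vert y\right\Vert$ for every $y \in \mathcal{H}$. Applying this at each stage of the composition shows that the scalar sequence $\left\{\left\Vert P_n x\right\Vert\right\}_{n=0}^{\infty}$ is nonincreasing and bounded below by $0$, hence it converges to some limit $\ell \ge 0$; in particular $\{P_n x\}$ is bounded. Next, since $v$ is a weak cluster point, I would choose a subsequence with $P_{n_k} x \rightharpoonup v$ and, by passing to a further subsequence, arrange that $n_k \ge n_{k-1}+M$ for all $k \ge 1$, so that Lemma \ref{Lem 3} becomes applicable. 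That lemma furnishes a subsequence indexed by $\left\{n_k^{\prime}\right\}$ together with a finite block $\left\{l_i\right\}_{i=1}^{M}$ whose set of values is all of $\left\{1,\dots,m\right\}$, satisfying $h\left(n_{n_k^{\prime}}+i\right) = l_i$ for $i = 1,\dots,M$. Writing $y_k := P_{n_{n_k^{\prime}}} x$, the sequence $\{y_k\}$ is bounded with $y_k \rightharpoonup v$, and $\left\Vert y_k\right\Vert \to \ell$ as a subsequence of $\left\{\left\Vert P_n x\right\Vert\right\}$.

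The crux is to verify the telescoping hypothesis of Lemma \ref{Lem 4} for $\{y_k\}$ and the family $\left\{T_{l_i}\right\}_{i=1}^{M}$. Because $h\left(n_{n_k^{\prime}}+i\right) = l_i$, the block $T_{l_M}\cdots T_{l_1}$ applied to $y_k$ coincides with the genuine consecutive segment $P_{n_{n_k^{\prime}}+M}\, x$, whose norm again tends to $\ell$. Setting $G_i := T_{l_i}\cdots T_{l_1}$ with $G_0 := Id$, the monotonicity above gives $\left\Vert G_M y_k\right\Vert \le \left\Vert G_i y_k\right\Vert \le \left\Vert G_0 y_k\right\Vert$ for each $i$; since both extreme norms converge to $\ell$, a squeeze yields $\left\Vert G_i y_k\right\Vert \to \ell$ for every $i$, whence $\left\Vert G_{i-1} y_k\right\Vert - \left\Vert G_i y_k\right\Vert \to 0$ for $i = 1,\dots,M$. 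This is precisely the hypothesis of Lemma \ref{Lem 4}, and its conclusion shows that $v$ is a common fixed point of $\left\{T_{l_i}\right\}_{i=1}^{M}$. As $\left\{l_i\right\}_{i=1}^{M}$ exhausts $\left\{1,\dots,m\right\}$, this gives $v \in \cap_{i=1}^{m}\mathrm{Fix}(T_i)$, as desired.

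The main obstacle I anticipate is the bookkeeping that identifies the block $T_{l_M}\cdots T_{l_1} y_k$ with the honest segment $P_{n_{n_k^{\prime}}+M} x$ of the unrestricted product. This identification, which is exactly what Lemma \ref{Lem 3} delivers, is what forces the upper telescoping endpoint $\left\Vert G_M y_k\right\Vert$ to share the common limit $\ell$ with $\left\Vert y_k\right\Vert$; without it the squeeze argument would not close, and the telescoping differences required by Lemma \ref{Lem 4} could fail to vanish.
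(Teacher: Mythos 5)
Your proposal is correct and follows essentially the same route as the paper: pass to a subsequence with gaps of at least $M$, invoke Lemma \ref{Lem 3} to extract a fixed block $\left\{ l_{i}\right\} _{i=1}^{M}$ exhausting $\left\{ 1,\dots,m\right\} $, use the monotone convergence of $\left\{ \left\Vert P_{n}x\right\Vert \right\} _{n=0}^{\infty}$ to squeeze the telescoping norm differences to zero, and conclude via Lemma \ref{Lem 4}. Your write-up merely makes explicit the squeeze argument that the paper leaves implicit after noting that $\lim_{n\rightarrow\infty}\left\Vert P_{n}x\right\Vert $ exists.
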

\begin{proof}
Clearly, there exists a strictly increasing sequence $\left\{ n_{k}\right\} _{k=0}^{\infty}$
such that $P_{n_{k}}x\rightharpoonup v$. Without any loss of generality,
we may assume that $n_{k}\ge n_{k-1}+M$ for each positive integer $k$.
By Lemma \ref{Lem 3}, \color{black} we may also assume that \color{black} there exists a finite sequence $\left\{ l_{i}\right\} _{i=1}^{M}$
so that the set of values of $\left\{ l_{i}\right\} _{i=1}^{M}$ is
$\left\{ 1,\dots,m\right\}$ and $h\left(j\right)=l_{j-n_{k}}$ for
each $k\in\mathbb{N}$ and each $j\in\left\{ n_{k}+1,\dots,n_{k}+M\right\}$.
As a result,
\begin{equation}
P_{n_{k+1}}=T_{h\left(n_{k+1}\right)}\cdots T_{h\left(n_{k+1}+M+1\right)}T_{l_{M}}\cdots T_{l_{1}}P_{n_{k}}.\label{eq:-10}
\end{equation}
Since the origin is a common fixed point of the family $\left\{ T_{i}\right\} _{i=1}^{m}$
and since each $T_{i}$ is nonexpansive, it follows that $\lim_{n\rightarrow\infty}\left\Vert P_{n}x\right\Vert$
exists. \color{black} Hence the sequence $\left\{ P_{n}x\right\} _{n=0}^{\infty}$ is bounded and by \eqref{eq:-10} \color{black}
\[
\left\Vert T_{l_{i-1}}\cdots T_{l_{1}}P_{n_{k}}x\right\Vert -\left\Vert T_{l_{i}}\cdots T_{l_{1}}P_{n_{k}}x\right\Vert \rightarrow0
\]
for each $i=1,\dots,M$ as $k\rightarrow\infty$.  Now we apply Lemma \ref{Lem 4}
to the sequence $\left\{ P_{n_{k}}x\right\} _{k=0}^{\infty}$ and
the family $\left\{ T_{l_{i}}\right\} _{i=1}^{M}$ to obtain that
the sequence $\left\{ T_{l_{M}}\cdots T_{l_{1}}P_{n_{k}}x\right\} _{k=0}^{\infty}$
converges weakly to $v$, which is a common fixed point of the family
$\left\{ T_{l_{i}}\right\} _{i=1}^{M}$ and hence is a common fixed
point of the family $\left\{ T_{i}\right\} _{i=1}^{m}$, because the
set of values of $\left\{ l_{i}\right\} _{i=1}^{M}$ is $\left\{ 1,\dots,m\right\}$.
\end{proof}
\makeatletter \renewenvironment{proof}[1][\proofname\space of Theorem \ref{thm6 in DR}] {\par\pushQED{\qed}\normalfont\topsep6\p@\@plus6\p@\relax\trivlist\item[\hskip\labelsep\bfseries#1\@addpunct{.}]\ignorespaces}{\popQED\endtrivlist\@endpefalse} \makeatother
\begin{proof}
Without any loss of generality, we may assume that the origin is a common
fixed point of the family $\left\{ T_{i}\right\} _{i=1}^{m}$. Assume
that the point $x\in\mathcal{H}$. Since the sequence $\left\{ P_{n}x\right\} _{n=0}^{\infty}$
is bounded, it has a weak cluster point $v$. Let $v_{1},v_{2}\in\mathcal{H}$
be two weak cluster points of $\left\{ P_{n}x\right\} _{n=0}^{\infty}$.
By Lemma \ref{lem 5}, they are common fixed points of the family
$\left\{ T_{i}\right\} _{i=1}^{m}$. By Lemma \ref{Lemma 3 in DR},
we have $v_{1}=v_{2}$. As a result, the sequence $\left\{ P_{n}x\right\} _{n=0}^{\infty}$
converges weakly to $v$, which is a common fixed point of the family
$\left\{ T_{i}\right\} _{i=1}^{m}$.
\end{proof}

\section{\label{sec5}Proofs of the main results}

\makeatletter \renewenvironment{proof}[1][\proofname\space of Theorem \ref{thm1}] {\par\pushQED{\qed}\normalfont\topsep6\p@\@plus6\p@\relax\trivlist\item[\hskip\labelsep\bfseries#1\@addpunct{.}]\ignorespaces}{\popQED\endtrivlist\@endpefalse} \makeatother
\begin{proof}
\textit{(i)} By Lemma \ref{lem2}, we have
\begin{equation}
\cap_{i=1}^{m}C_{i}\subset\cap_{n=0}^{\infty}\mathrm{Fix}(S_{n}).\label{eq:-11}
\end{equation}
Let $n\in\mathbb{N}$. By Lemma \ref{lem1}, $S_{n}$ is strongly
nonexpansive and hence nonexpansive. Since $\mathrm{Fix}(S)\subset\cap_{i=1}^{m}C_{i}$,
it follows that $\cap_{i=1}^{m}C_{i}\not=\emptyset$. Relation \eqref{eq:-11}
and Remark \ref{NE is QNE} imply that $S_{n}$ is quasi-nonexpansive.
Applying Theorem \ref{GenOpial}, we arrive at the result.\\
 \textit{(ii)} By Lemma \ref{lem2}, Lemma \ref{lem1} and Corollary
\ref{compos.averaged},
\begin{equation}
\mathrm{Fix}(Q)=\cap_{i=1}^{m}C_{i}\not=\emptyset.\label{eq:-12}
\end{equation}
By Lemma \ref{lem1}, $Q$ is strongly nonexpansive and therefore
nonexpansive. By Lemma \ref{Strongly nonexpansive  is asymptotically regular},
it is asymptotically regular. The desired result now follows from \eqref{eq:-12}
and Theorem \ref{Opial}.
\end{proof}
\begin{remark}
In light of Lemma \ref{lem2}, part \textit{(ii)} of \color{black}Theorem \ref{thm1} \color{black} can also be proved by using Theorem \ref{thm6 in DR} (without using Theorem
\ref{Opial}): define a quasi-periodic mapping $h$ from $\mathbb{N}$
onto $\left\{ 1,\dots,j_{f}+1\right\} $ by $h\left(n\right):=n\,\,\mathrm{mod\,}\,\left(j_{f}+1\right)+1$
for each $n\in\mathbb{N}$ and a sequence of operators $\left\{ T_{i}\right\} _{i=1}^{j_{f}+1}$
by $T_{i}:=S_{i-1}$ for each $i\in\left\{ 1,\dots,j_{f}+1\right\} $.
\end{remark}
\makeatletter \renewenvironment{proof}[1][\proofname\space of Theorem \ref{thm2}] {\par\pushQED{\qed}\normalfont\topsep6\p@\@plus6\p@\relax\trivlist\item[\hskip\labelsep\bfseries#1\@addpunct{.}]\ignorespaces}{\popQED\endtrivlist\@endpefalse} \makeatother
\begin{proof}
Let $F:=\left\{ S_{n}\mid \,n\in\left\{ 0,\dots,M-1\right\} \right\} $.
Since the sequence\\
 $\left\{ \left\{ C_{m,r}\left(n\right)\left(j\right)\right\} _{j=1}^{r}\right\} _{n=0}^{\infty}$
is $M$-quasi-periodic, it is clear that $F$ contains all possible
operators $S_{n}$ \color{black}(for all $n\in\mathbb{N}$)\color{black}. Set $T:\left\{ 1,\dots,p\right\} \rightarrow F$
to be the bijection, where $p$ is the cardinality of the set $F$.
Define $h:\mathbb{N}\rightarrow\left\{ 1,\dots,p\right\} $ by $h\left(n\right)=T^{-1}\left(S_{n}\right)$.
Clearly, $h$ is a quasi-periodic mapping onto $\left\{ 1,\dots,p\right\} $.
Then $S_{n}=T_{h\left(n\right)}$ for each natural number $n$. Now
\textit{(i)} follows from Lemma \ref{lem1} and Theorem \ref{thm6 in DR},
while \textit{(ii)} follows from \textit{(i)} and Lemma \ref{lem2}.
\end{proof}
\makeatletter \renewenvironment{proof}[1][\proofname\space of Theorem \ref{thm3}] {\par\pushQED{\qed}\normalfont\topsep6\p@\@plus6\p@\relax\trivlist\item[\hskip\labelsep\bfseries#1\@addpunct{.}]\ignorespaces}{\popQED\endtrivlist\@endpefalse} \makeatother
\begin{proof}
 \color{black}Since $\cap_{j=1}^{k}\mathrm{Fix}(T_{j})\not=\emptyset$, by Theorem \ref{thm6 in DR}, the sequence $\left\{ P_{n}x\right\} _{n=0}^{\infty}$ converges weakly to a point $x_{*}\in\cap_{j=1}^{k}\mathrm{Fix}(T_{j})$. Since $\cap_{i=1}^{m}C_{i}$ has an interior point, by Lemma \ref{lem2}, $\cap_{n=1}^{j_{f}}\mathrm{Fix}(S_{n})=\cap_{i=1}^{m}C_{i}\not=\emptyset$. By Lemma \ref{lem1} and Corollary \ref{compos.averaged}, $\mathrm{Fix}(Q)=\cap_{n=1}^{j_{f}}\mathrm{Fix}(S_{n})$. As a result, in both cases \it{(i)} and \it{(ii)}
\[
x_{*}\in\cap_{j=1}^{k}\mathrm{Fix}(T_{j})\subset\mathrm{Fix}(Q)=\cap_{n=1}^{j_{f}}\mathrm{Fix}(S_{n})=\cap_{i=1}^{m}C_{i}.
\]
 \end{proof}
 \color{black}

\section*{Funding}
Simeon Reich was partially supported by the Israel Science Foundation (Grant 820/17), the Fund for the Promotion of Research at the Technion
and by the Technion General Research Fund.

\section*{Acknowledgments}
This work is accepted for publication in the journal Optimization Methods and Software, November 2022.

\end{document}